\theoremstyle{plain}
\newtheorem{lemma}{Lemma}[section]
\newtheorem{theorem}[lemma]{Theorem}
\theoremstyle{definition}
\newtheorem{example}[lemma]{Example}
\newcommand{\N}{{\mathbb N}}
\newcommand{\R}{{\mathbb R}}
\newcommand{\C}{{\mathbb C}}
\newcommand{\cE}{\mathcal{ E}}
\newcommand{\cH}{\mathcal{ H}}
\newcommand{\cL}{\mathcal{L}}
\newcommand{\lam}{\lambda}
\newcommand{\tr}{{\rm tr}}
\newcommand{\Dom}{{\rm Dom}}
\newcommand{\Spec}{{\rm Spec}}
\newcommand{\Span}{{\rm span}}
\newcommand{\Ker}{{\rm Ker}}
\newcommand{\Ran}{{\rm Ran}}
\renewcommand{\Re}{{\rm Re}\;}
\newcommand{\efrac}[2]{\genfrac{}{}{0ex}{}{#1}{#2}}
\newcommand{\conv}{{\rm conv}}
\newcommand{\dist}{{\rm dist}}
\newcommand{\dsc}{\mathrm{dis}}
\newcommand{\ess}{\mathrm{ess}}
\newcommand{\range}{\mathrm{Range}}
\newcommand{\dis}{\mathrm{dis}}
\newcommand{\ud}{\mathrm{d}}
\renewcommand{\div}{\operatorname{div}}
\newcommand{\grad}{\operatorname{grad}}
\newcommand{\curl}{\operatorname{curl}}
\newcommand{\B}{\mathrm{B}}
\newcommand{\slb }{\mathrm{s}}
\newcommand{\cyl}{\mathrm{c}}
\newcommand{\basis}{\frak{B}}
\newcommand{\formT}{\frak{t}}
\newcommand{\exctt}{\eqref{constant_coeff}}
\newcommand{\exvar}{\eqref{non-constant_coeff}}
\newcommand{\wto}{\rightharpoonup}
\begin{document}

\title[Eigenvalue enclosures and convergence for MHD operators]{Eigenvalue enclosures and convergence for the linearized MHD operator}
\author[L. Boulton]{Lyonell Boulton}
\address{Department of Mathematics and Maxwell Institute for Mathematical Sciences, Heriot-Watt University, Edinburgh, EH14 4AS, UK}
\email{L.Boulton@hw.ac.uk}
\author[M. Strauss]{Michael Strauss}
\address{Department of Mathematics and Maxwell Institute for Mathematical Sciences, Heriot-Watt University, Edinburgh, EH14 4AS, UK}
\email{M.Strauss@hw.ac.uk}
\keywords{Eigenvalue enclosures, magnetohydrodynamics, Schur complement, spectral pollution}
\begin{abstract}
We discuss how to compute certified enclosures for the eigenvalues of benchmark linear magnetohydrodynamics (MHD)
operators in the plane slab and cylindrical pinch configurations. For the plane slab, our method relies upon the formulation
of an eigenvalue problem associated to the Schur complement, leading to highly accurate upper bounds for the
eigenvalue. For the cylindrical configuration, a direct application of this formulation is possible, however, it cannot
be rigourously justified. Therefore in this case we rely on a specialized technique based on a method proposed by
Zimmermann and Mertins. In turns this technique is also applicable for finding accurate complementary bounds in the
case of the plane slab. We establish convergence rates for both approaches.
\end{abstract}

\date{\today}
\maketitle


\section{Introduction}

Let $\Omega\subset \R^3$. The linearized ideal MHD equation
\[
    \rho \partial_t^2 \xi(t) + K\xi(t)=0 \qquad \xi(0)=\xi_0 \qquad \partial_t \xi(0) =v_0
\]
for displacement vector $\xi:\Omega\longrightarrow \R^3$  and force operator
\[
      K\xi=\grad [\gamma P(\div \xi)+(\grad P)\cdot \xi]+
      \frac{1}{\mu} [\B\times\curl (\curl(\mathbf{B}\times \xi))-(\curl \mathbf{B})\times \curl(\mathbf{B}\times \xi)]
\]
arises in applications from plasma confinement in thermonuclear fusion. The constants $\mu$ and $\gamma$ here denote the
magnetic permeability and heat ratio. The smooth function $\rho$ is the density, $P>0$ is the pressure and $\mathbf{B}$ the
divergence-free magnetic field of the given equilibrium, satisfying $\mu \grad P=(\curl\mathbf{B})\times\mathbf{B}$.

In the study of this equation a fundamental role is played by the eigenvalue problem associated to $K$.
The appropriate Hilbert space setting ensures that $K$ has a self-adjoint realization. A considerable amount of
research has been devoted to the formulation of a rigorous operator theoretic framework for $K$ and to
the structure of the spectrum, \cite{1989Lifschitz}. Particular attention has been payed to the plane slab (plasma layer) and the cylindrical  (plasma pinch)
configurations \cite{1987Kako,1991Kakoetal,1991Raikov,atla} where $K$ is reduced to a block ordinary
differential operator matrix. A systematic description  (analytical or numerical) of properties of the
eigensolutions turns out to be difficult even for these, the
simplest configurations. This is due to the presence of regions of essential spectrum near the bottom end of
the spectrum.

The plane slab configuration has been the subject of thorough analytical investigation and has become a benchmark
model for the top dominant class of block operator matrix, see \cite{2007Tretter} and references therein.
Precise eigenvalue asymptotics can be found in this case by means of the WKB method
\cite[\S7.5]{1989Lifschitz} or by means of  specialised variational principles, see
\cite[Theorem~3.1.4]{2007Tretter} and references therein. The cylindrical configuration is more
involved due to the presence of singularities in the coefficients of the differential expression;
however, eigenvalue asymptotics are known in this case, \cite{1986Raikov}.

Specialized variational approaches are extremely useful for examining analytic asymptotics for the eigenvalues
in the case of the plasma layer configuration. Unfortunately, as they usually involve a triple variation
formulation, it is arguable whether they are well suited for direct numerical implementations.

If a sequence of subspaces is guaranteed not to produce spectral pollution, then the standard Galerkin method can
be used. A prescribed recipe for avoiding spurious modes when these subspaces are
generated by the finite element method dates back to \cite{1977Rappaz, 1991Kakoetal}. In this classical approach
convergence is guaranteed, however, it is never clear whether a computed eigenvalue is on the left or on the right
of the exact eigenvalue. In this respect the method is not certified.

A technique for finding certified enclosures for the eigenvalues of $K$ in the case of the plane slab configuration
was considered in \cite{2011Strauss} based on the method proposed in \cite{1998Davies, shar, LS2004}. The approach
was based on computing the so-called second order spectrum of $K$ for given finite dimensional subspaces generated
by the spectral basis. In the present paper we consider a further computational strategy which improves upon this
technique in terms of accuracy. Our main approach is to combine two complementary Galerkin-type methods for computing
eigenvalue enclosures which, by construction,  never produce spectral pollution.

For the plane slab, our method relies on the formulation of an eigenvalue problem associated to the Schur complement,
this leads to highly accurate upper bounds. For the cylindrical configuration, a direct application of this formulation
is possible, however, it cannot be rigorously justified. Therefore in this case we rely on a specialized technique
based on a method proposed by Zimmermann and Mertins \cite{zim} as described by Davies and Plum in \cite[Section~6]{dapl}. This approach is intimately related to classical
methods, see \cite{1949Kato,1963Lehmann,1985Goerisch}. 
We also apply this technique to the Schur complement and find accurate complementary lower bounds for the plane slab.

In Section 2 we give a mathematical formulation of the MHD operators under investigation, and some of their spectral
properties. In Section 3 we examine the approximation technique due to Zimmermann and Mertins. We present a formulation of this 
technique in terms of the Galerkin method which establishes both approximation and, importantly, the convergence 
of the method. Our main results are contained in Section 4. We present a highly efficient method for obtaining
upper bounds for eigenvalues above the essential spectrum of top-dominant block operator matrices, an example of which is the matrix $K$ associated to the plasma layer configuration. We show in Theorem \ref{super} that the convergence rate for this approach is the same as 
that achieved by the Galerkin method when applicable (below the essential spectrum). Our method is therefore extremely efficient. 
We also combine this approach with the Zimmermann and Mertins technique to obtain complementary lower bounds for the eigenvalues. 
In Theorem \ref{enclose} we use our results from Section 3 to obtain convergence rates for these lower bounds. In Sections 5 and 6 we apply our 
results to the plasma and cylindrical configurations, respectively.


\section{One-dimensional MHD operators}

The reduction process for the force operator, the precise constraints on the equilibrium quantities and the boundary
conditions on $\Omega$, which yield the one-dimensional boundary value problems associated to the plane slab and
cylindrical configurations, are described in detail in \cite[\S7.2 and \S8.2]{1989Lifschitz}, respectively. Through
this reduction $K$ becomes similar to a superposition of operators which are
self-adjoint extensions of block matrix differential operators of the form
 \begin{equation} \label{matrix}
M_0 = \begin{pmatrix} A & B \\ B^* & D \end{pmatrix}
\end{equation}
acting on $L^2$-spaces of a one-dimensional component.

For the plasma layer, the components of $M_0$ are explicitly given by
 \begin{equation} \label{slab}
\left\{ \begin{aligned}
A&= -\rho_0^{-1}\partial_x \rho_0(v_a^2 + v_s^2)\partial_x + k^2v_a ^2  ,
\\  B&= \begin{pmatrix}(-i\rho_0^{-1}\partial_x\rho_0(v_a^2 + v_s^2)+ig)k_{\perp},
(-i\rho_0^{-1}\partial_x \rho_0v_s^2+ig)k_{\parallel} \end{pmatrix},\\
D&=\begin{pmatrix} k^2v_a^2 + k_{\perp}^2v_s^2
& k_{\perp}k_{\parallel}v_s^2 \\ k_{\perp}k_{\parallel}v_s^2 & k_{\parallel}^2v_s^2
\end{pmatrix},
\end{aligned} \right.
\end{equation}
where
\begin{align*}
\Dom(A) &= H^{2}((0,1);\rho_0 \ud x)\cap H_{0}^{1}((0,1);\rho_0 \ud x)  \\
\Dom(B) &= [H^{1}((0,1);\rho_0 \ud x)]^2  \\
\Dom(D) &= [L^2((0,1);\rho_0\,\ud x)]^2.
\end{align*}
We assume that the Alfv\'en speed $v_a$, the sound speed $v_s$, and the coordinates of the wave vector $k_\perp$ and
$k_\parallel$, are bounded differentiable functions. We also assume that $\rho_0$ and $v_s$ are bounded away from $0$.
Following standard notation in the literature  $k^2(x)=k_\perp^2(x)+k_\parallel^2(x)$  and $g$ is the gravitation constant.
In this case \cite[Proposition~3.1.2]{2007Tretter}
\begin{equation} \label{dom_self}
M_0:[\Dom(A)\cap\Dom(B^*)]\times \Dom(B) \longrightarrow\left[L^2((0,1);\rho_0\,\ud x)\right]^3
\end{equation}
is essentially self-adjoint. Denote by $M_{\slb}$ the closure of $M_0$. Then $M_{\slb}$ is bounded from below, and the essential spectrum is given by
the range of the Alfv\'en frequency $v_a^2k_{\parallel}$ and the mean frequency
$v_a^2v_s^2k_{\parallel}/(v_a^2 + v_s^2)$, see \cite[\S7.6]{1989Lifschitz}.
The discrete spectrum always accumulates at $+\infty$. We show in Example~\ref{constant_coeff} that endpoints of the
essential spectrum can also be points of accumulation.

\begin{example}   \label{constant_coeff}
Let $k_{\|}=\rho=v_a=v_s=g=1$ and $k_\perp=0$. Then $\Spec_\ess(M_\slb)=\{1/2,1\}$ and
$\Spec_\dsc(M_\slb)=\{\lambda_k^\pm\}_{k=1}^\infty$ where
\[
\begin{aligned}
   \lambda_k^+&=1+k^2\pi^2+\sqrt{1+k^2\pi^2+k^4\pi^4} =\mathcal{O}(k^2) & \text{and} \\
   \lambda_k^-&= 1+k^2\pi^2-\sqrt{1+k^2\pi^2+k^4\pi^4}\to\frac12 \qquad &\text{as } k\to\infty.
\end{aligned}
\]
Both $\lambda_k^\pm$ are positive and increasing in $k$. Also
\[
\begin{gathered}
   \operatorname{Range} \int_{(-\infty,3/4]} \!\!\!\!\!\!\!\!\!\!\!\!\!\!\!\!\!\ud E_\lambda =\Span\{ \phi^-_{k}\} , \qquad
   \operatorname{Range} \int_{(3/4,3/2]} \!\!\!\!\!\!\!\!\!\!\!\!\!\!\!\!\! \ud E_\lambda =\Span\{ \phi^1_{k}\}  \\
   \text{and} \qquad \operatorname{Range} \int_{(3/2,\infty)} \!\!\!\!\!\!\!\!\!\!\!\!\!\!\!\ud E_\lambda =\Span\{ \phi^+_{k}\},
\end{gathered}
\]
where
\begin{equation} \label{eigenfunctions}
\begin{gathered}
  \phi^-_{k}(x) = \alpha^-_{k}
   \begin{pmatrix} \sin k\pi x \\ 0 \\ \frac{i}{1-\lambda^-_k}(k\pi \cos k\pi x +\sin k\pi x)    \end{pmatrix}, \quad
    \phi^1_{k}(x) = \frac{1}{\sqrt{2}}
   \begin{pmatrix} 0\\ \sin k\pi x \\ 0   \end{pmatrix} \\
   \text{and} \qquad   \phi^+_{k}(x) = \alpha^+_{k}
   \begin{pmatrix} \sin k\pi x \\ 0 \\ \frac{i}{1-\lambda^+_k}(k\pi \cos k\pi x +\sin k\pi x)    \end{pmatrix},
   \end{gathered}
\end{equation}
the constants $\alpha^\pm_k$ chosen so that $\{\phi_k^\pm\}_{k=1}^\infty$ are normalized. Note that $\{\phi_k^-,\phi_k,\phi_k^+\}$ is an orthonormal basis of $[L^2(0,1)]^3$.
\end{example}

\begin{example}   \label{non-constant_coeff} Let
$\rho_0 = 1$, $k_{\perp} = 1$ , $k_{\parallel} = 1$, $g=1$, $v_a(x)=\sqrt{7/8 - x/2}$, and $v_s(x) = \sqrt{1/8 + x/2}$. The essential spectrum of $M_\slb$ is given by
\[
\Ran(v_a^2k_{\parallel})\cup\Ran(v_a^2v_s^2k_{\perp}/(v_a^2 + v_s^2)) = [7/64,1/4]\cup[3/8,7/8].
\]
Below we will use the fact that $d=\max\Spec(D) = 1+\sqrt{17/32}<\pi^2$.
\end{example}

The plasma pinch configuration yields a differential operator $M_0$ with singular coefficients,
 \begin{equation} \label{cylinder} \left\{
\begin{aligned}
A&= -\partial_r (b^2+\gamma P) \partial_r^\ast + r\left(
\frac{b^2\sin^2 \phi }{r^2}\right)'+b^2k^2_\phi
\\  B&= \begin{pmatrix}(-i\partial_r (b^2+\gamma P) m_\phi +2i b^2k\frac{\sin \phi}{r},
-i\partial_r \gamma P k_\phi \end{pmatrix}\\
D&= \begin{pmatrix} m_\phi^2 (b^2 + \gamma P)+b^2 k^2_\phi
& m_\phi k_\phi \gamma P \\  m_\phi k_\phi \gamma P & k_\phi^2 \gamma P
\end{pmatrix}
\end{aligned} \right.
\end{equation}
acting on $[L^2((0,R_0);r\, \ud r)]^3$. Here $\partial_r^\ast=\frac1r \partial_r r$,
\[
\begin{aligned}
   \mathbf{B}&=(0,b(r) \sin(\phi(r)),b(r)\cos \phi(r)),  \\
   P'(r)&=-b(r)b'(r)=\frac{1}{r} b(r)^2\sin^2 \phi(r),
\end{aligned}
\]
$b(r)$ and $\phi(r)$ are smooth functions with $b'(0)=\phi(0)=0$,
\[
    k_\phi=k \cos \phi + \frac{m}{r} \sin \phi \qquad
\text{and} \qquad m_\phi=\frac{m}{r} \cos \phi -k \sin \phi.
\]
The indices $R_0 k$ and $m$ are integer numbers corresponding to the Fourier
mode decomposition of $K$.

In order to define rigourously the domain of $M_0$ for this configuration, a further
change of variables $r=e^{s}$ is usually implemented, \cite{1987Kako}. Under this change of variables, $M_0$ becomes similar to an operator acting on $[L^2((-\infty, \log R_0);\ud x)]^3$ which is essentially self-adjoint in the space of rapidly decreasing functions at $-\infty$ vanishing at $ \log R_0$, \cite[Theorem~2.3]{1987Kako}.  Operator $M_0$ is essentially self-adjoint in the pre-image of this space under the similarity transformation. We denote by $M_\cyl$ the unique self-adjoint extension of $M_0$ in the latter domain.

The original formulation \eqref{cylinder} is numerically more stable for the treatment of the eigenvalues via a projection method. The finite element space generated by Hermite elements of order $3$, $4$ and $5$, subject to Dirichlet boundary conditions  at $0$ and $R_0$, considered below are $C^1$-conforming and hence are all contained in $\Dom(M_{\cyl})$ for the benchmark equilibrium quantities considered in our examples.

The essential spectrum of $M_{\cyl}$ consists of an Alfv\'en band determined by the range of $b^2 k_\phi^2$, and a slow magnetosonic band determined by the range of
$(b^2 k_\phi^2 \gamma P)/(b^2+\gamma P)$, \cite[Theorem~3.5]{1987Kako}. As in the previous configuration, these bands are located near the bottom of the spectrum and $+\infty$ is always an accumulation point if the discrete spectrum.

\begin{example} \label{example_cylinder}
Let $P\equiv 0$, $b\equiv 1$, $\phi\equiv 0$, and $k=m=1$. Then $\Spec_\ess(M)=\{0,1\}$
(the point $0$ is the slow magnetosonic spectrum and the point $1$
is the Alfv\'en spectrum). On the other hand $\Spec_\dsc (M)=\{E^2+1:J'_1(E)=0\}$
where $J_v(x)$ is the Bessel function of index $v$.
\end{example}


\section{Pollution-free bounds for eigenvalues}\label{zimmerman}

The essential spectrum of both operators $M_{\slb}$ and $M_{\cyl}$ is non-negative. Therefore unstable
spectrum can only occur in the discrete spectrum. The eigenvalues below the bottom of the essential
spectrum can be computed using the standard Galerkin method. Hence, the stability of the configuration can
be determined by means of the Rayleigh-Ritz variational principle.

By contrast, computing the eigenvalues above the essential spectrum is problematic due to
the possibility of variational collapse. The technique described in this section avoids spectral pollution 
and can be implemented on the finite element method. It gives certified enclosures up to machine precision for eigenvalues above
the essential spectrum. In Section~\ref{supersection} we argue that this technique should be applied,
not only to $M_{\slb}$, but also to its Schur complement. In order to keep a neat notation, we formulate
the general procedure for a generic semi-bounded self-adjoint operator $T$ acting on a Hilbert space
$\mathcal{H}$.


\subsection{Basic notation}

Let the dense subspace  $\Dom(T)\subset \mathcal{H}$ be the domain of $T$. For $\mu=\min\Spec(T)$, let $\formT$ be the close bilinear form induced by the non-negative operator $T-\mu$ with domain $\Dom(\formT)=\Dom(| T|^{\frac{1}{2}})$. We denote the inner products and norms that render $\Dom(T)$ and  $\Dom(\formT)$ with a Hilbert space structure, respectively by $\langle u,v\rangle_{T} = \langle Tu,Tv\rangle + \langle u,v\rangle$,  $\langle u,v\rangle_{\formT} = \formT(u,v)+\langle u,v\rangle$, $\|\cdot \|_{T}$ and $\|\cdot\|_{\formT}$.

Let $\cE$ be a subspace of $\Dom(T)$. For another subspace $\mathcal{L}$, we denote
\begin{align*}
\delta(\cE,\cL) &= \sup_{\phi\in\cE,~\|\phi\|=1}\dist[\phi,\cL]
& \text{if } \cL\subset \mathcal{H} \\
\delta_{\formT}(\cE,\cL) &= \sup_{\phi\in\cE,~\|\phi\|_{\formT}=1}\dist_{\formT}[\phi,\cL] & \text{if } \cL\subset \Dom(\formT)\\
\delta_T(\cE,\cL) &= \sup_{\phi\in\cE,~\|\phi\|_T=1}\dist_T[\phi,\cL]
& \text{if } \cL\subset \Dom(T).
\end{align*}
Here and elsewhere $\dist_\bullet [\phi,\cL]$ refers to the Haussdorff distance in the norm $\|\cdot\|_\bullet$ between $\{\phi\}$ and $\cL$.

Below we establish spectral approximation results by following the classical framework of \cite{chat}. These results will be formulated in a general context for sequences of subspaces $\cL_n \subset \cH$ which are dense as $n\to \infty$ in the following precise senses.
We will say
\begin{align*}
   (\cL_n)\in \Lambda \equiv \Lambda(I) & \quad \iff \quad \dist[u,\cL_n]\to 0 & \forall u\in\cH \\
   (\cL_n)\in\Lambda(\formT) & \quad \iff \quad \dist_{\formT}[u,\cL_n]\to 0 & \forall u\in \Dom(\formT)\\
   (\cL_n)\in\Lambda(T) & \quad \iff \quad \dist_{T}[u,\cL_n]\to 0 & \forall u\in \Dom(T).
\end{align*}

Let $\cL\subset \Dom(\formT)$. Below we denote by $\Spec(T,\cL)$ the spectrum of the classical weak Galerkin problem:
\begin{equation*} \label{galerkin}
\exists~u\in\cL\backslash\{0\} \text{ and }\mu\in\mathbb{R}\quad\text{such that}\quad \formT(u,v) = \mu\langle u,v\rangle\quad\textrm{for all}\quad v\in\cL.
\end{equation*}

Assume that an interval $(a,b)\subset \R$ is such that $\operatorname{Tr}[\int_{(-\infty,a)} \ud E_\lambda]=\operatorname{Tr}[\int_{(b,\infty)} \ud E_\lambda]=\infty$.
Then for general $(\cL_n)\in \Lambda(T)$, the set
\[
   (a,b) \cap \bigcap_{n=1}^{\infty} \overline{\bigcup _{k\geq n} \Spec(T,\cL_k)}
\]
could be a much larger set than $(a,b)\cap \Spec(T)$.
This phenomenon is usually called spectral pollution. See \cite{1977Rappaz} for further details on this in case $T=M_{\slb}$ and $\cL_n$ chosen as finite element spaces.

The following classical convergence result will play a fundamental role below, see \cite[Theorem 6.11]{chat}. Assume that $\|T\|<\infty$. Let $(\cL_n)\in\Lambda$. For any isolated eigenvalue $\lambda\in \Spec(T) \setminus \conv[\Spec_\ess(T)]$,
\begin{equation}\label{galerkin2}
\delta(\ker(T-\lambda),\cL_n)=\varepsilon_n\to 0 \quad \Rightarrow \quad\dist[\lambda,\Spec(T,\cL_n)] = \mathcal{O}(\varepsilon_n^2).
\end{equation}

If $T$ is only bounded from below, the condition $\cL_n\subset\Dom(\formT)$ and $(\cL_n)\in\Lambda$, is typically not sufficient to ensure approximation. 
By applying the spectral mapping theorem  it can be shown that \eqref{galerkin2} still holds true for a
$\lambda<\min \Spec_{\mathrm{ess}}(T)$ whenever $(\cL_n)\in\Lambda(\formT)$
and $\delta$ is replaced by $\delta_\formT$, see for example the trick applied in the proof of \cite[Corollary~3.6]{2010bs}.
This type of convergence is often called superconvergence.


\subsection{The Zimmermann-Mertins method}\label{zimmermethod}
The following method for computing eigenvalue enclosures originated from \cite{zim} and is closely related to the
classical Lehmann method.  Below we show that it may be described in simple terms by means of mapping theorems at the
level of reducing spaces for the resolvent. As we will see subsequently, convergence estimates will follow easily from
\eqref{galerkin2}.

This method turns out to be efficient for computing eigenvalue enclosures for $M_{\slb}$ and $M_{\cyl}$ in their original
matrix formulation. In Section~\ref{supersection} we will discuss a further technique which allows improvement in accuracy
for $M_{\slb}$ and depends on re-writing the eigenvalue problem in terms of the Schur complement.

According to \cite[Theorem~11]{dapl} the present approach is equivalent with optimal constant to another method formulated in
\cite{1998Davies}. The latter is closely related to the so-called second order relative spectrum \cite{shar} which was applied to $M_\slb$ in \cite{2011Strauss}.
We should stress that the latter is probably best for obtaining preliminary information about
the spectrum, \cite{2010bs, 2011bStrauss}. This \emph{a priori} information includes a reliable
guess on the interval $[a,b]$ below.

Let $a<b$ be such that $(a,b)\cap \Spec(T)\not= \varnothing$. Assume that a finite-dimensional subspace $\cL \subset \Dom(T)$ is such that
 \begin{equation}\label{zimcon}
    \min_{u\in \cL}\frac{\langle Tu,u \rangle}{\|u\|^2}<b \quad\textrm{and}\quad
    \max_{u\in \cL}\frac{\langle Tu,u \rangle}{\|u\|^2}>a.
\end{equation}
Note that this condition is certainly satisfied by $\cL=\cL_n$ for $n$ sufficiently large, if $(\cL_n)\in \Lambda(\formT)$. We define two inverse residuals associated to the interval $(a,b)$:
\begin{equation}\label{zimmer}
\tau^+=\max_{u\in \cL} \frac{\langle  (T-a)u,u  \rangle}{\langle  (T-a) u,(T-a) u  \rangle}\quad
\text{and} \quad\tau^-=\min_{u\in \cL} \frac{\langle  (T-b)u,u  \rangle}{\langle  (T-b) u,(T-b) u  \rangle}.
\end{equation}
By virtue of \eqref{zimcon}, we have $\tau_+>0$ and $\tau_-<0$.

\begin{lemma} \label{non-pollu}
Suppose that $[a,b]\cap \Spec(T)= \{\lambda\}$ and $a<\lambda<b$. For any subspace $\cL\subset \Dom(T)$ satisfying \eqref{zimcon} the inverse residuals \eqref{zimmer} are such that
\begin{equation}\label{zimbound}
b + \frac{1}{\tau^-}\le\lambda\le a+\frac{1}{\tau^+}.
\end{equation}
\end{lemma}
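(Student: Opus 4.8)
The plan is to exploit the fact that $[a,b]$ isolates the single eigenvalue $\lambda$, so that on the spectral subspace complementary to $\ker(T-\lambda)$ the operators $T-a$ and $T-b$ are boundedly invertible with definite sign, and to use this to control the two Rayleigh-type quotients defining $\tau^\pm$. Concretely, I would first set $P = \int_{\{\lambda\}}\ud E_\mu$, the spectral projector onto $\ker(T-\lambda)$, and decompose any $u\in\cL$ as $u = Pu + (I-P)u =: u_0 + u_1$. The key spectral input is that, by hypothesis, $\Spec(T)\setminus\{\lambda\}$ is contained in $(-\infty,a]\cup[b,\infty)$, hence on $\Ran(I-P)$ one has $(T-a)\geq 0$ vanishing only where $T\leq a$... — more precisely $\Spec(T|_{\Ran(I-P)}) \subset (-\infty,a]\cup[b,\infty)$, so $(T-a)(T-b)$ restricted to $\Ran(I-P)$ is a non-negative operator. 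This last observation, written as $\langle (T-a)(T-b)u_1,u_1\rangle \geq 0$, is the quadratic inequality that drives everything.

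Next I would expand the residual quotients. Since $P$ reduces $T$, we have $\langle (T-a)u,u\rangle = (\lambda-a)\|u_0\|^2 + \langle (T-a)u_1,u_1\rangle$ and $\langle (T-a)u,(T-a)u\rangle = (\lambda-a)^2\|u_0\|^2 + \|(T-a)u_1\|^2$, and similarly with $b$ in place of $a$. To prove the upper bound $\lambda \le a + 1/\tau^+$, it suffices to show that for every $u\in\cL$,
\begin{equation*}
\frac{\langle (T-a)u,u\rangle}{\langle (T-a)u,(T-a)u\rangle} \le \frac{1}{\lambda-a},
\end{equation*}
since the left side, maximized over $\cL$, is exactly $\tau^+$, and this inequality rearranges to $(\lambda-a)\langle (T-a)u,u\rangle \le \|(T-a)u\|^2$, i.e. to $\langle (T-a)\big((T-a)-(\lambda-a)\big)u,u\rangle \ge 0$, i.e. $\langle (T-a)(T-\lambda)u,u\rangle \ge 0$. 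But $(T-a)(T-\lambda) = (I-P)(T-a)(T-\lambda)(I-P)$ because the $u_0$ component is annihilated by $T-\lambda$, and on $\Ran(I-P)$ the spectrum lies in $(-\infty,a]\cup[b,\infty)$, where $(T-a)$ and $(T-\lambda)$ have the same sign, so the product is non-negative. (One needs $\tau^+>0$, guaranteed by \eqref{zimcon}, to divide safely and keep the inequality direction.) The lower bound $b + 1/\tau^- \le \lambda$ follows by the symmetric argument applied to $T-b$: the relevant inequality is $\langle (T-b)(T-\lambda)u,u\rangle \ge 0$, valid for the same sign reason, and since $\tau^-<0$ dividing reverses the inequality to produce $b+1/\tau^-\le\lambda$.

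The main obstacle I anticipate is purely the bookkeeping of signs: one must be careful that $T-a$ is positive on $\Ran(I-P)$ while $T-\lambda$ changes sign there (negative on the part below $a$, positive on the part above $b$), yet the product $(T-a)(T-\lambda)$ is still non-negative because the two factors agree in sign pointwise on the spectrum; and one must track that $\tau^+>0$ versus $\tau^-<0$ flips the final division. A secondary technical point is that $T$ need only be semi-bounded, not bounded, so $(T-a)u$ etc. must be interpreted for $u\in\Dom(T)$ — this is exactly why the hypothesis $\cL\subset\Dom(T)$ is imposed, and it makes all inner products above well defined with no domain subtleties. Beyond that the argument is a one-line spectral-calculus estimate on $\Ran(I-P)$ together with the orthogonal splitting along $P$.
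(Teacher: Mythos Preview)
Your argument is correct. The reduction to the pointwise spectral inequality $\langle (T-a)u,(T-\lambda)u\rangle\ge 0$ (and its companion with $b$) is valid for all $u\in\Dom(T)$ by the functional calculus, since $(t-a)(t-\lambda)\ge 0$ and $(t-b)(t-\lambda)\ge 0$ on $\Spec(T)\subset(-\infty,a)\cup\{\lambda\}\cup(b,\infty)$; the sign bookkeeping you flag is handled correctly. One small remark: the spectral projector $P$ and the splitting $u=u_0+u_1$ are not actually needed --- the inequality $\int (t-a)(t-\lambda)\,\ud\langle E_t u,u\rangle\ge 0$ holds directly without isolating the $\lambda$-component, and the side observation that $(T-a)(T-b)\ge 0$ on $\Ran(I-P)$ is never used.

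The paper takes a different route. It observes that $\tau^-$ is precisely the minimum of the Rayleigh quotient of the \emph{bounded} operator $(T-b)^{-1}$ over the transformed subspace $\hat\cL=(T-b)\cL$, and then invokes the ordinary variational characterisation $\min\Spec((T-b)^{-1})=(\lambda-b)^{-1}$ together with the monotonicity of the minimum under restriction. Your argument is arguably more elementary for this lemma in isolation. The paper's reformulation, however, pays off in the next result (Lemma~\ref{daplcon}): having identified $\tau^-_n$ as a Galerkin eigenvalue $\min\Spec((T-b)^{-1},\hat\cL_n)$, the convergence rate $|\tau^-_n-(\lambda-b)^{-1}|=\mathcal{O}(\varepsilon_n^2)$ follows immediately from the classical bound \eqref{galerkin2}, which would be less transparent from your formulation.
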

\begin{proof}
We prove the first inequality, the second my be proved similarly. Let
$\hat{\cL} = (T-b)\cL$, then
\begin{align*}
(\lambda - b)^{-1}&= \min[\Spec(T-b)^{-1}] = \min_{v\in \Dom(T)}\frac{\langle(T-b)^{-1}v,v \rangle}{\|v\|^2}  \\  & \leq  \min_{v\in\hat{\cL}}\frac{\langle(T-b)^{-1}v,v \rangle}{\|v\|^2}
= \min_{u\in \cL} \frac{\langle  (T-b)u,u  \rangle}{\langle  (T-b) u,(T-b) u  \rangle}=\tau^-.
\end{align*}
\end{proof}

Note that \[\tau^-=\min[\Spec((T-b)^{-1},\hat\cL)]\quad\textrm{and}\quad\tau^+=\max[\Spec((T-a)^{-1},(T-a)\cL)].\]
This observation turns out to be useful when studying convergence of the enclosure \eqref{zimbound}
as $\cL$ increases in dimension. Below $\tau^{\pm}_n=\tau^{\pm}$ for $\cL_n=\cL$.

\begin{lemma}\label{daplcon}
Let $\lambda\in\Spec_{\dis}(T)$ and
assume that $[a,b]\cap\Spec(T) = \{\lambda\}$ with $a<\lambda<b$. Let $\basis$ be an orthonormal basis of
$\ker(T-\lambda)$, and $(\cL_n)\in\Lambda(T)$ be such that $\dist_T[\basis,\cL_n] = \varepsilon_n\to 0$. For all sufficiently large $n\in\mathbb{N}$
\begin{equation}\label{zimconv}
b + \frac{1}{\tau_n^-}\le\lambda\le a+\frac{1}{\tau_n^+}\quad\text{and}\quad\left(a+\frac{1}{\tau_n^+}\right) - \left(b + \frac{1}{\tau_n^-}\right) = \mathcal{O}(\varepsilon_n^2).
\end{equation}
\end{lemma}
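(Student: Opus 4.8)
The plan is to deduce \eqref{zimconv} from the characterizations $\tau^-_n = \min[\Spec((T-b)^{-1},(T-b)\cL_n)]$ and $\tau^+_n = \max[\Spec((T-a)^{-1},(T-a)\cL_n)]$ recorded after Lemma~\ref{non-pollu}, by applying the Galerkin convergence estimate \eqref{galerkin2} to the bounded self-adjoint operators $S_b := (T-b)^{-1}$ and $S_a := (T-a)^{-1}$. First I would fix the relevant spectral data: since $[a,b]\cap\Spec(T)=\{\lambda\}$ with $a<\lambda<b$, the values $(\lambda-b)^{-1}<0$ and $(\lambda-a)^{-1}>0$ are isolated eigenvalues of $S_b$ and $S_a$ respectively; moreover $(\lambda-b)^{-1}=\min\Spec(S_b)$ and $(\lambda-a)^{-1}=\max\Spec(S_a)$, so each lies outside $\conv[\Spec_\ess(\cdot)]$ (the essential spectra of $S_b$, $S_a$ are the images of $\Spec_\ess(T)$ under the respective resolvent maps, which are bounded away from these extreme eigenvalues). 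The eigenspaces are $\ker(S_b-(\lambda-b)^{-1})=\ker(S_a-(\lambda-a)^{-1})=\ker(T-\lambda)$, and the test subspaces are $\hat\cL_n^{(b)}:=(T-b)\cL_n$ and $\hat\cL_n^{(a)}:=(T-a)\cL_n$, which do lie in $\cH$ as required (no form-domain subtlety here, since $S_a,S_b$ are bounded).

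The key step is then to estimate the approximation defect $\delta(\ker(T-\lambda),\hat\cL_n^{(b)})$ in the \emph{ambient $\cH$-norm} in terms of $\varepsilon_n=\dist_T[\basis,\cL_n]$. Given a normalized $\psi\in\ker(T-\lambda)$, by hypothesis there is $u_n\in\cL_n$ with $\|(T-b)\psi-(T-b)u_n\|=\|(T-b)(\psi-u_n)\|\le \|\psi-u_n\|_T\lesssim \varepsilon_n$; but $(T-b)\psi=(\lambda-b)\psi$, so $(T-b)u_n\in\hat\cL_n^{(b)}$ lies within $\mathcal{O}(\varepsilon_n)$ of $(\lambda-b)\psi$, hence $\dist[(\lambda-b)\psi,\hat\cL_n^{(b)}]=\mathcal{O}(\varepsilon_n)$. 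Normalizing (here $\lambda-b\neq 0$ is what makes this work) gives $\delta(\ker(T-\lambda),\hat\cL_n^{(b)})=\mathcal{O}(\varepsilon_n)$, and symmetrically for the $a$-side. Applying \eqref{galerkin2} to $S_b$ yields $\dist[(\lambda-b)^{-1},\Spec(S_b,\hat\cL_n^{(b)})]=\mathcal{O}(\varepsilon_n^2)$; since $\tau^-_n=\min\Spec(S_b,\hat\cL_n^{(b)})$ and $(\lambda-b)^{-1}=\min\Spec(S_b)$, and since for $n$ large the Galerkin spectrum has its least point near $(\lambda-b)^{-1}$ while all other Galerkin eigenvalues cluster near $\Spec(S_b)\setminus\{(\lambda-b)^{-1}\}$ (away from the minimum), we get $\tau^-_n=(\lambda-b)^{-1}+\mathcal{O}(\varepsilon_n^2)$, and likewise $\tau^+_n=(\lambda-a)^{-1}+\mathcal{O}(\varepsilon_n^2)$. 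Taking reciprocals (using $\tau^-_n<0$, $\tau^+_n>0$ bounded away from $0$) gives $a+1/\tau^+_n=\lambda+\mathcal{O}(\varepsilon_n^2)$ and $b+1/\tau^-_n=\lambda+\mathcal{O}(\varepsilon_n^2)$; subtracting gives the $\mathcal{O}(\varepsilon_n^2)$ gap, and the enclosure itself is just Lemma~\ref{non-pollu} once \eqref{zimcon} holds, which it does for large $n$ since $(\cL_n)\in\Lambda(T)$ implies $(\cL_n)\in\Lambda(\formT)$.

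The main obstacle I anticipate is the step identifying $\tau^\pm_n$ with the \emph{extreme} Galerkin eigenvalue and upgrading ``$\dist[(\lambda-b)^{-1},\Spec(S_b,\hat\cL_n^{(b)})]$ is small'' to ``$\tau^-_n=\min\Spec(S_b,\hat\cL_n^{(b)})$ is the approximant of $(\lambda-b)^{-1}$.'' In principle \eqref{galerkin2} only says \emph{some} Galerkin eigenvalue is $\mathcal{O}(\varepsilon_n^2)$-close to $(\lambda-b)^{-1}$; one must rule out that the \emph{minimum} of the Galerkin spectrum undershoots far below $(\lambda-b)^{-1}=\min\Spec(S_b)$. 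This cannot happen because $\Spec(S_b,\hat\cL_n^{(b)})\subset[\min\Spec(S_b),\max\Spec(S_b)]$ by the min-max principle (Galerkin eigenvalues of a bounded self-adjoint operator lie in its numerical range), so $\tau^-_n\ge(\lambda-b)^{-1}$; combined with the upper bound $\tau^-_n\le(\lambda-b)^{-1}+\mathcal{O}(\varepsilon_n^2)$ coming from the trial vector $(T-b)u_n$ and the min-characterization of $\tau^-_n$, the identification is forced. A secondary bookkeeping point is confirming that the isolation hypothesis of \eqref{galerkin2} applies to $S_b$ and $S_a$ near their extreme points, i.e. that $(\lambda-b)^{-1}$ and $(\lambda-a)^{-1}$ are isolated from the rest of the respective spectra — this follows from $[a,b]\cap\Spec(T)=\{\lambda\}$ together with continuity of $t\mapsto(t-b)^{-1}$ on $\Spec(T)\setminus\{b\}$. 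Everything else is routine reciprocal-estimate manipulation.
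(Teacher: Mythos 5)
Your proposal is correct and follows essentially the same route as the paper: characterize $\tau_n^\pm$ as the extreme Galerkin eigenvalues of the bounded resolvents $(T-b)^{-1}$ and $(T-a)^{-1}$ on $(T\mp\,\cdot\,)\cL_n$, verify $\delta(\ker(T-\lambda),(T-b)\cL_n)=\mathcal{O}(\varepsilon_n)$ by transporting the $\|\cdot\|_T$-approximants through $T-b$, and then apply \eqref{galerkin2}. The only place you go beyond the paper's write-up is in explicitly justifying that the $\mathcal{O}(\varepsilon_n^2)$-close Galerkin eigenvalue is in fact the extreme one $\tau_n^-$ (via the numerical-range/min-max containment, equivalently via the one-sided bound already furnished by Lemma~\ref{non-pollu}), a point the paper leaves implicit.
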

\begin{proof}
The condition \eqref{zimcon} is satisfied for all sufficiently large $n\in\mathbb{N}$, so the left hand side of
\eqref{zimconv} follows from Lemma~\ref{non-pollu}.

Let $\hat{\cL}_n=(T-b)\cL_n$. Since $(\cL_n)\in\Lambda(T)$, it follows that $(\hat{\cL}_n)\in \Lambda$.
Let $\basis = \{\phi_1,\dots,\phi_{k}\}$. Then there exist vectors $u_{n,j}\in\cL_n$, such that
$\|(T-b)(\phi_j - u_{n,j})\|\le (1+| b|)\varepsilon_n$
for each $1\le j\le k$. Set $\hat{u}_{n,j} = (T-b)u_{n,j}\in\hat{\cL}_n$, then for any normalised
$\phi\in\ker(T-\lambda)$ we have
\begin{align*}
\Big\|\phi - \sum_{j=1}^k\frac{\langle\phi_j,\phi\rangle}{\lambda-b}\hat{u}_{n,j}\Big\| &=
\Big\|\sum_{j=1}^k\langle\phi_j,\phi\rangle\phi_j -
\sum_{j=1}^k\frac{\langle\phi_j,\phi\rangle}{\lambda-b}\hat{u}_{n,j}\Big\|\\
&=\Big\|(T-b)\sum_{j=1}^k\frac{\langle\phi_j,\phi\rangle}{\lambda-b}(\phi_j - u_{n,j})\Big\|\\
&\le \frac{k(1+| b|)\varepsilon_n}{b-\lambda}.
\end{align*}
Thus $\delta(\ker(T-\lambda),\hat{\cL}_n)\le k(1+| b|)\varepsilon_n/(b-\lambda)$. Hence, applying \eqref{galerkin2}
to the operator $(T-b)^{-1}$ and eigenvalue $(\lambda - b)^{-1}$ yields
$|\tau_n^- - (\lambda - b)^{-1}| = \mathcal{O}(\varepsilon_n^2)$, and therefore
$b+\frac{1}{\tau_n^-}-\lambda=\mathcal{O}(\varepsilon_n^2)$. Similarly we have
$a+\frac{1}{\tau_n^+}-\lambda=\mathcal{O}(\varepsilon_n^2)$, and the right hand side of \eqref{zimconv} follows.
\end{proof}

By means of an example we now show that $(\cL_n)\in\Lambda(\formT)$ and $(\cL_n)\subset\Dom(T)$, is not generally sufficient to
ensure a decrease in the size of the enclosure as $n\to \infty$. The crucial point here is that
$(\cL_n)\in\Lambda(\formT)$ does not ensure that $\dist_T(\basis,\cL_n)$ decrease to $0$.

\begin{example}\label{ex3}
Let $T=M_\slb$ be as in Example~\ref{constant_coeff}. For $j\in \N$, let $\phi_{3k-2}=\phi^-_k$, $\phi_{3k-1}=\phi^1_k$ and $\phi_{3k}=\phi^+_k$, where the right hand sides are given by \eqref{eigenfunctions}. Let $\lambda_{3k-2}=\lambda^-_k$, $\lambda_{3k-1}=1$ and $\lambda_{3k}=\lambda_k^+$. Then $T\phi_j=\lambda_j \phi_j$ and
$\{\phi_j\}$ is an orthonormal basis of $\mathcal{H}=[L^2(0,1)]^3$.

For $n>2$ consider the subspaces
$\cL_n = \Span\{\alpha_n \phi_1+\varepsilon_n\phi_{3n},\phi_2,\dots,\phi_{3n-1}\}$ where $\varepsilon_n=\frac{1}{\lambda_n^+}$ and $\alpha_n=\sqrt{1-\varepsilon_n^{2}}$.
Then $\cL_n\subset\Dom(T)$ for every $n\in\mathbb{N}$. We show that $(\cL_n)\in\Lambda(\formT)$. Let $u\in\Dom(\formT)$ and $\gamma_j=\langle u,\phi_j\rangle$. Then
\[
\quad\| u\|_{\formT} = \sqrt{\sum_{j=1}^\infty (1+\lambda_j )|\gamma_j|^2}<\infty.
\]
Let $u_n=\gamma_1(\alpha_n\phi_1+\varepsilon_n\phi_{3n}) + \sum_{j=2}^{3n-1}\gamma_j\phi_j$. Then
\begin{align*}
\| u - u_n\|_{\formT}^2 &= \Big\|\gamma_1(\alpha_n-1) \phi_1 +
(\gamma_1\varepsilon_n-\gamma_{3n})\phi_{3n} - \sum_{j=3n+1}^\infty\gamma_j\phi_j\Big\|_{\formT}^2\\
&=(1+\lambda_1^-)|\gamma_1|^2|\alpha_n-1|^2 + (1+\lambda_n^+)|\gamma_1\varepsilon_n-\gamma_{3n}|^2 + \sum_{j=3n+1}^\infty (1+\lambda_j)|\gamma_j|^2\\
&\le(1+\lambda_1^-)|\gamma_1|^2|\alpha_n-1|^2 + \frac{(1+\lambda_n^+)|\gamma_1|^2}{(\lambda_n^+)^2}  + \sum_{j=3n}^\infty (1+\lambda_j)|\gamma_j|^2\longrightarrow 0,
\end{align*}
therefore $(\cL_n)\in\Lambda(\formT)$.

This ensures that $\dist_{\formT}(\phi_1,\cL_n)\to 0$. On the other hand, a straightforward calculation shows that
$\dist_T(\phi_1,\cL_n)\to 1+\sqrt{3}$ as $n\to \infty$. Choose $a=0$ and $b=\frac{\lambda_1^-+\lambda_2^-}{2}$ so that $[a,b]\cap\Spec(T)=\{\lambda_1^-\}$.
For $n$ large enough,
\[
\begin{gathered}
\frac{\langle T(\alpha_n\phi_1+\varepsilon_n\phi_{3n}),\alpha_n\phi_1+\varepsilon_n\phi_{3n}\rangle}
{\|\alpha_n\phi_1+\varepsilon_n\phi_{3n}\|^2}  - b=\alpha_n^2 \lambda_1^-+\frac{1}{\lambda_n^+}-
\left(  \frac{\lambda_1^-+\lambda_2^-}{2} \right) < 0\quad\textrm{and} \\
\frac{\langle T\phi_{n-1},\phi_{n-1}\rangle}
{\|\phi_{n-1}\|^2}  - a=\lambda_n^+ > 0.
\end{gathered}
\]
Thus \eqref{zimcon} is satisfied and we may obtain upper and lower bounds on the eigenvalue $\lambda_1^-$ from the left side of \eqref{zimconv}.

Let us now prove that the length of the enclosure in \eqref{zimconv} does not decrease. Indeed
\[
\begin{gathered}
\tau_n^+=\frac{\alpha_n^2 \lambda_1^-+ (\lambda_n^+)^{-1}}{\alpha_n^2(\lambda_1^-)^2+1}\to
\frac{\lambda_1^-}{(\lambda_1^-)^2+1}<(\lambda_1^-)^{-1} \\
\tau_n^-=\frac{\alpha_n^2(\lambda_1^- -b)+\varepsilon_n^2(\lambda_n^+-b)}{\alpha_n^2(\lambda_1^- -b)^2+\varepsilon_n^2(\lambda_n^+-b)^2} \to \frac{(\lambda_1^- -b)}{(\lambda_1^- -b)^2+1}<
(\lambda_1^- -b)^{-1}
\end{gathered}
\]
as $n\to \infty$.
\end{example}

It is easily verified that
\[
\delta_T(\ker(T-\lambda),\cL_n)=\mathcal{O}(\varepsilon_n)\qquad \Rightarrow
 \qquad\delta_{\formT}(\ker(T-\lambda),\cL_n)=\mathcal{O}(\varepsilon_n).
\]
As the following example shows, $\delta_T(\ker(T-\lambda),\cL_n)$ and $\delta_{\formT}(\ker(T-\lambda),\cL_n)$ can converge
at the same rate, but the latter may be faster. Therefore, there is a potential loss in convergence of the method when compared
with the standard Galerkin method in the case where the latter is applicable. This loss of convergence is compensated by the
fact that the enclosures found are certified and free from spectral pollution.

\begin{example}\label{ex1}
Let $T$ and $\phi_n$ be as in Example~\ref{ex3}. Let
$\varepsilon_n=(\lambda_n^+)^{-2}$ and $\alpha_n = \sqrt{1-\varepsilon_n^{2}}$.
If we consider
$\cL_n = \Span\{\alpha_n\phi_1+\varepsilon_n\phi_{3n-1},\phi_2,\dots,\phi_{3n-2}\}$, then $(\cL_n)\in\Lambda(T)$, and both
$\delta_{\formT}(\ker(T-1),\cL_n)$ and
 $\delta_{T}(\ker(T-1),\cL_n)$ are $\mathcal{O}(n^{-2})$. If we consider
 $\cL_n = \Span\{\alpha_n\phi_1+\varepsilon_n\phi_{3n},\phi_2,\dots,\phi_{3n-1}\}$,
 then $(\cL_n)\in\Lambda(T)$ once again but now
$\delta_{\formT}(\ker(T-\lambda_1^-),\cL_n)= \mathcal{O}(n^{-3})$ and
 $\delta_{T}(\ker(T-\lambda_1^-),\cL_n)=\mathcal{O}(n^{-2})$.
\end{example}


\section{Operator matrices and eigenvalue enclosures} \label{supersection}

The linearized MHD operator associated to the plasma layer configuration \eqref{slab} falls into the class of top dominant
block matrices. We show that enclosures for the eigenvalues of $M_\slb$ which lie above the essential spectrum can be obtained from
enclosures for the eigenvalues of its Schur complement.  Denoted  by $S(\mu)$, the latter is a $\mu$-dependant holomorphic family of
semi-bounded operators. Upper bounds for its eigenvalues can be found from a direct application of
the Galerkin method. We show in Section~\ref{evub} that these upper bounds are
superconvergent as the dimension of $\cL_n$ increases, hence they turn out to be asymptotically sharper than the upper
bounds found from the method of Section~\ref{zimmerman} applied directly to $M_\slb$. In Section~\ref{evlb}, on the
other hand, we show how to find lower bounds for the eigenvalues of $M_\slb$ from corresponding lower bounds on the
eigenvalues of $S(\mu)$. The latter are found from the left side of \eqref{zimbound} with $T=S(\mu)$ for a particular choice of $\mu$.

\subsection{Basic notation}
The results established below apply to any block operator matrix $M_0$ as in \eqref{matrix} which is top dominant in the following precise sense, see \cite{2007Tretter}.
\begin{enumerate}
\item \label{top_1} $A$ and $D$ are self-adjoint operators acting on Hilbert spaces $\cH_1$ and $\cH_2$, respectively.
\item  \label{top_2} $A$ is bounded from below, $D$ is bounded from above, $B$ is closed and densely defined on a domain of $\cH_2$
                      with values in $\cH_1$.
\item  \label{top_3} $\Dom(| A|^{\frac{1}{2}})\subset\Dom(B^*)$, $\Dom(B)\subset\Dom(D)$ and $\Dom(B)$ is a core for $D$.
\end{enumerate}
Without further mention, we assume that the entries of $M_0$ are subject to these conditions.
They are satisfied by the plane slab configuration MHD operator, however, for $m\not=0$  the ansatz \ref{top_3}
does not hold in general for the cylindrical pinch configuration.

The first condition in \ref{top_3} and the semi-boundedness of $A$, together, imply the existence of
constants $\alpha,\beta\ge 0$ such that
\begin{equation}\label{domcons}
\| B^*u\|^2 \le \alpha\frak{a}[u] + \beta\| u\|^2\quad\textrm{for all}\quad u\in\Dom(| A|^{\frac{1}{2}})=\Dom(\frak{a})
\end{equation}
where $\frak{a}$ is the closure of the quadratic form associated
to $A$.  These two constraints also imply that $(A-\nu)^{-1}B$ is a bounded operator, so $\Dom(\overline{(A-\nu)^{-1}B})=\mathcal{H}_2$, for an arbitrary $\nu<\min\Spec(A)$. The self-adjoint closure of $M_0$, which we denote here
by $M$, is explicitly given by
\begin{align}
\Dom(M)&=\bigg\{\left(
\begin{array}{c}
x\\
y
\end{array} \right):y\in\Dom(D),~x + \overline{(A-\nu )^{-1}B}y\in\Dom(A)\bigg\}\label{top1}\\
M\left(
\begin{array}{c}
x\\
y
\end{array} \right) &= \left(
\begin{array}{c}
A(x + \overline{(A-\nu )^{-1}B}y) - \nu\overline{(A-\nu )^{-1}B}y\\
B^*x + Dy
\end{array} \right)\label{top2},
\end{align}
see \cite[Section 4.2]{math0} and references therein.

Set $d=\max\Spec(D)$ and $U=\{z\in\mathbb{C}:\Re z >d\}$. For $\mu\in U$ consider the following  family of forms
\begin{equation}\label{schurform}
\frak{s}(\mu)[x,y] = \frak{a}[x,y] - \mu\langle x,y\rangle - \langle(D-\mu)^{-1}B^*x,B^*y\rangle
\end{equation}
with common domain $\Dom(\frak{s})=\Dom(\frak{s}(\mu))=\Dom(\frak{a})$. Then $\frak{s}(\mu)$ is a holomorphic family of type (a),
see \cite[Proposition 2.2]{math}. Associated to these forms is a
holomorphic family of type (B) sectorial operators $S(\mu)$:
\begin{align}
\Dom(S(\mu)) &= \{x\in\Dom(\frak{a}): x - \overline{(A-\nu)^{-1}B}(D-\mu)^{-1}B^*x\in\Dom(A)\}\label{schurdom}\\
S(\mu)x &= (A-\nu)(x - \overline{(A-\nu)^{-1}B}(D-\mu)^{-1}B^*x) + (\nu-\mu)x\label{schuract};
\end{align}
see \cite[Proposition 4.4]{math0}. 
Here, as above, $\nu<\min\Spec(A)$ is fixed, but can be chosen arbitrarilly.
We note that for any $x\in\Dom(\frak{s})$ and $\mu\in U\cap\mathbb{R}$, 
\begin{equation}\label{derivative}
\frac{\partial \frak{s}(\mu)[x]}{\partial \mu} = -\| x\|^2 - \|(D-\mu)^{-1}B^*x\|^2.
\end{equation}
The families $\frak{s}(\cdot)$ and $S(\cdot)$ are called the Schur form and the Schur
complement associated to $M_0$, respectively.

The form $\frak{s}(\mu)$ is symmetric and semi-bounded whenever $\mu\in \R\cap U$. The corresponding operator
$S(\mu)$ is therefore self-adjoint and bounded from below. We set the spectra of the Schur complement as
\begin{align*}
\Spec(S)&=\{\mu\in U:0\in\Spec(S(\mu))\}, \\
\Spec_\dsc(S)&=\{\mu\in U:0\in\Spec_\dsc(S(\mu))\} \qquad and \\
\Spec_\ess(S)&=\{\mu\in U:0\in\Spec_\ess(S(\mu))\}.
\end{align*}


\subsection{Upper bounds via Schur complement}  \label{evub}
We denote
\[
\lambda_{\mathrm{e}}=\inf\{\Spec_{\ess}(M)\cap(d,\infty)\}
\] and $\lambda_1\le\lambda_2\le\cdots$ the
repeated eigenvalues of $M$ which lie in the interval $(d,\lambda_{\mathrm{e}})$.

\begin{lemma}
The spectra of $S$ and $M$ coincide on $(d,\lambda_{\mathrm{e}})$ and $\Spec_{\ess}(S)\cap(d,\lambda_{\mathrm{e}}) = \varnothing$. Moreover
$\dim\Ker(S(\lambda_j)) = \dim\Ker(M-\lambda_j)$.
\end{lemma}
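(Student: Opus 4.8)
The plan is to exploit the standard Schur-complement / Frobenius factorization that underlies the explicit formulas \eqref{top1}--\eqref{top2} and \eqref{schurdom}--\eqref{schuract}. Fix $\mu\in(d,\lae)$. First I would establish the key equivalence: $\mu\in\Spec(M)$ if and only if $0\in\Spec(S(\mu))$, and more precisely that $(M-\mu)$ and $S(\mu)$ have ``the same'' kernel via an explicit isomorphism. The point is that for $\mu<d=\max\Spec(D)$ is excluded, so $(D-\mu)^{-1}$ is a bounded operator, and one has the block factorization (valid on the relevant domains)
\[
M-\mu = \begin{pmatrix} I & \overline{(A-\nu)^{-1}B}\,(D-\mu)^{-1}\,?\end{pmatrix}\cdots
\]
— more cleanly, I would argue directly: if $(x,y)^\top\in\Dom(M)$ satisfies $(M-\mu)(x,y)^\top=0$, then the second row gives $B^*x+(D-\mu)y=0$, hence $y=-(D-\mu)^{-1}B^*x$ (legitimate since $\mu>d$ forces $(D-\mu)^{-1}$ bounded and $B^*x$ makes sense because $x\in\Dom(|A|^{1/2})\subset\Dom(B^*)$); substituting into the first row and comparing with \eqref{schuract} yields $S(\mu)x=0$ with $x\in\Dom(S(\mu))$. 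Conversely, given $x\in\Ker(S(\mu))$, setting $y=-(D-\mu)^{-1}B^*x$ produces an element of $\Ker(M-\mu)$; one checks $x\neq0\iff(x,y)^\top\neq0$. This gives a linear bijection $\Ker(M-\mu)\cong\Ker(S(\mu))$, hence $\dim\Ker(S(\mu))=\dim\Ker(M-\mu)$ for every $\mu\in(d,\lae)$; in particular the eigenvalues coincide on $(d,\lae)$ with equal multiplicities. The same substitution argument, applied to the whole range rather than just the kernel, shows $\mu\in\Spec(M)\iff 0\in\Spec(S(\mu))$ on $(d,\lae)$: one uses that $(M-\mu)$ is invertible iff the Schur complement $S(\mu)$ is invertible, which is exactly the content of \cite{math0, math} cited just above for the family $S(\cdot)$.

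The remaining assertion is that $\Spec_\ess(S)\cap(d,\lae)=\varnothing$, equivalently $0\notin\Spec_\ess(S(\mu))$ for $\mu\in(d,\lae)$. Here I would use that the essential spectrum is invariant under the Schur reduction in the regime where $(D-\mu)^{-1}$ is bounded: by definition $\lae=\inf\{\Spec_\ess(M)\cap(d,\infty)\}$, so for $\mu<\lae$ the point $\mu\notin\Spec_\ess(M)$, i.e. $M-\mu$ is Fredholm of index zero (being self-adjoint). The block decomposition $M-\mu = \mathcal{U}\,(S(\mu)\oplus(D-\mu))\,\mathcal{U}^*$ — up to bounded, boundedly invertible triangular factors built from $\overline{(A-\nu)^{-1}B}$ and $(D-\mu)^{-1}$ — then transfers the Fredholm property: since $D-\mu$ is already boundedly invertible for $\mu>d$, $M-\mu$ is Fredholm iff $S(\mu)$ is Fredholm, hence $S(\mu)$ is Fredholm for $\mu\in(d,\lae)$, i.e. $0\notin\Spec_\ess(S(\mu))$. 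Combined with the first part, any $\mu\in\Spec(S)\cap(d,\lae)$ satisfies $0\in\Spec_\dsc(S(\mu))$, so $\Spec(S)\cap(d,\lae)=\Spec_\dsc(S)\cap(d,\lae)$ and it is discrete; together with the multiplicity identity this completes the proof.

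The main obstacle is purely domain-theoretic rather than conceptual: making the substitution $y=-(D-\mu)^{-1}B^*x$ rigorous requires knowing that $x\in\Dom(|A|^{1/2})$ whenever $(x,y)^\top\in\Ker(M-\mu)$, and that the resulting $x$ lands in $\Dom(S(\mu))$ as described by \eqref{schurdom} — this is where conditions \ref{top_1}--\ref{top_3} and the bound \eqref{domcons} are used, and where I would lean on \cite[Proposition 4.4]{math0} and \cite[Proposition 2.2]{math} to identify $S(\mu)$ with the form $\frak{s}(\mu)$ and to know its domain precisely. The transfer of the Fredholm property through the triangular factorization also needs a small argument that the off-diagonal operators $\overline{(A-\nu)^{-1}B}$ and $B^*(A-\nu)^{-1}$-type terms are bounded, which again follows from \eqref{domcons}. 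Once these domain identifications are in place, every step is a short algebraic manipulation.
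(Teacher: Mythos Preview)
For the first and third assertions your sketch matches the paper: both invoke \cite[Proposition~4.4]{math0}, and your kernel bijection $x\leftrightarrow(x,-(D-\mu)^{-1}B^*x)^\top$ is precisely the map underlying that reference.

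The gap is in your argument for $\Spec_\ess(S)\cap(d,\lae)=\varnothing$. The Frobenius--Schur factorization of $M-\mu$ relative to the invertible block $D-\mu$ has triangular factors with off-diagonal entries $B(D-\mu)^{-1}$ and $(D-\mu)^{-1}B^*$, \emph{not} the $\overline{(A-\nu)^{-1}B}$ and $B^*(A-\nu)^{-1}$ terms you name; the latter belong to the factorization relative to the $(1,1)$ block, which is useless here since $A-\mu$ need not be invertible. Under \ref{top_1}--\ref{top_3} neither $B(D-\mu)^{-1}$ nor $(D-\mu)^{-1}B^*$ is bounded on the ambient space --- in the plane-slab example $D$ is bounded multiplication while $B$ contains a derivative, so $B(D-\mu)^{-1}$ is genuinely unbounded on $\cH_2$ --- and the Fredholm transfer as you state it does not go through. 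The paper instead argues by contradiction via a Weyl sequence: given a singular sequence $x_n$ for $S(\lambda)$ at $0$, it lifts to $y_n=(x_n,-(D-\lambda)^{-1}B^*x_n)^\top\in\Dom(M)$ with $(M-\lambda)y_n\to 0$ and $\|y_n\|\ge 1$, and then extracts a subsequence $y_{n(k)}\wto 0$, contradicting $\lambda\notin\Spec_\ess(M)$. The crucial step is to bound $\|B^*x_n\|$ uniformly, which follows from \eqref{domcons} together with $\frak{a}[x_n]\le\frak{s}(\lambda)[x_n]+\lambda=\langle S(\lambda)x_n,x_n\rangle+\lambda$; the right side stays bounded because $S(\lambda)x_n\to 0$. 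Thus the form bound \eqref{domcons} is used not to make a global factorization bounded, but only to control $B^*$ along the particular Weyl sequence.
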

\begin{proof}
For the first and third assertions, see \cite[Proposition 4.4]{math0}. 
For the second assertion we proceed by contradiction. 

Suppose there exists $\lambda\in (d,\lambda_{\mathrm{e}})$ such that $0\in \Spec_\ess(S(\lambda))$.
Since $S(\lambda)=S(\lambda)^*$, there is a singular Weyl sequence $x_n\in \Dom(S(\lambda))$ such that $\|x_n\|=1$, $x_n\wto 0$ and $\|S(\lambda)x_n\|\to 0$. Let
\begin{align*}
y_n=\begin{pmatrix}
x_n\\
-(D-\lambda_j)^{-1}B^*x_n
\end{pmatrix}.
\end{align*}
Then $y_n\in\Dom(M)$ and $\| y_n\|\ge 1$.  A direct calculation shows that
\[
   (M-\lambda) y_{n}=\begin{pmatrix}  S(\lambda)x_n \\ 0 \end{pmatrix} \to 0.
\]

We prove that $y_n$ has a subsequence $y_{n(k)}\wto 0$, which in turn is a contradiction because $\lambda\not \in\Spec_\ess(M)$. Let $\mathcal{D}=(D-\lambda)\Dom(B)$. By virtue of the second and third ansatz in \ref{top_3}, $\mathcal{D}$ is a dense subspace of $\mathcal{H}_2$. Moreover,
\begin{equation}  \label{weak_dense}
       \langle (D-\lambda)^{-1}B^* x_n,y \rangle \to 0 \qquad \text{for all } y\in \mathcal{D}.
\end{equation}
According to \eqref{domcons},
\begin{align*}
   \|B^* x_n \|^2 & \leq  \alpha \frak{a}[x_n]+\beta \leq \alpha( \frak{a}[x_n] -\lambda) +\alpha\lambda +\beta \\
   &\leq \alpha \frak{s}(\lambda)[x_n] +\alpha\lambda +\beta = \alpha\langle S(\lambda)x_n,x_n\rangle +\alpha\lambda +\beta.
\end{align*}
As the right hand side of this identity is uniformly bounded for all $n$, there exists a subsequence $x_{n(k)}$
and $z\in \mathcal{H}_2$ such that $(D-\lambda)^{-1} B^* x_{n(k)}\wto z$. Since \eqref{weak_dense} implies that $z=0$, the subsequence $y_{n(k)}$ is as needed. 
\end{proof}

For $\mu>d$, we denote the spectral subspace of $S(\mu)$ corresponding to an interval $J$ by
\[
  \cE_{J}(S(\mu))=\range \int_{J} \ud E_\lambda.
\]
Here we abuse the notation and write $\ud E_\lambda$ for the spectral measure associated to the self-adjoint
operator $S(\mu)$ also. Let the dimension of $\cE_{(-\infty,0)}(S(\mu))$ be
\[
    \kappa(\mu) =\tr  \int_{(-\infty,0)} \ud E_\lambda.
\]
Throughout this section we assume that $\kappa(\mu)<\infty$ for some $\mu>d$. By \cite[Theorem 4.5]{math0}, this
assumption and \ref{top_1}-\ref{top_3} imply the existence of $\gamma>d$ such that $(d,\gamma]\cap \Spec(M)=\varnothing$. We write
$\kappa:=\kappa(\gamma)<\infty$ and note that $\kappa$ is independent of the particular choice of
$\gamma\in (d,\min\{\Spec(M)\cap(d,\infty)\})$.

Let $l_1(m) = \min\{j\in\mathbb{N}:\lambda_j = \lambda_m\}$
and $l_2(m) = \max\{j\in\mathbb{N}:\lambda_j = \lambda_m\}$. Then $\kappa(\cdot)$ is constant on intervals contained
in $(d,\lambda_{\mathrm{e}})\setminus \Spec(M)$ and
\begin{align}
&\dim\cE_{(-\infty,0)}(S(\lambda_m)) = \kappa + l_1(m) - 1,\label{dims1}\\
&\dim\cE_{(-\infty,0]}(S(\lambda_m)) = \kappa + l_2(m)\label{dims0};
\end{align}
see \cite[Section 2]{math0} for further details.

\begin{example}   \label{ex_case_plasma}
In the case of the plasma layer configuration, $S(\mu)$ is a family of
Sturm-Liouville operators. It is readily seen from the results of \cite[\S7.5]{1989Lifschitz} that $\lambda_{\mathrm{e}}=\infty$,  $\kappa<\infty$ and
$\Spec(M)\cap(d,\lambda_{\mathrm{e}})$ consists of a sequence of simple eigenvalues which accumulate at $+\infty$.
\end{example}

We now describe the theoretical framework and basic procedure for approximating
a fixed eigenvalue $\lambda_m$. Denote by
$E_1(\mu)\le\dots\le E_{\kappa+m}(\mu)$ the first $\kappa+m$ eigenvalues of
$S(\mu)$ repeated according to their multiplicity. Let
\[
\cL=\Span\{u_1,\dots,u_n\}\subset \Dom(\frak{s}) \quad\textrm{where}\quad \langle u_i, u_j\rangle =\delta_{ij},
\]
be an $n$-dimensional subspace where $n\ge \kappa+m$. Consider the family of matrices $S_{\cL}(\mu)\in \C^{n\times n}$ whose entries are given by
\[
S_{\cL}(\mu)_{i,j} = \frak{s}(\mu)[u_j,u_i] \qquad i,j=1,\ldots,n.
\]
Denote by $E_1(\cL,\mu)\le\dots\le E_{\kappa+m}(\cL,\mu)$ the first $\kappa+m$ eigenvalues of
$S_{\cL}(\mu)$ repeated according to their multiplicity.

\begin{lemma}\label{upbdslem}
Let $\mu\in(d,\infty)$ be such that $E_{\kappa+m}(\cL,\mu) \le 0$,
then $\lambda_m\le\mu$.
\end{lemma}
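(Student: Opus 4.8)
The plan is to exploit the min-max characterisation of eigenvalues of $S(\mu)$ together with the monotonicity of the family $S(\mu)$ in the parameter $\mu$, and the bookkeeping identities \eqref{dims1}--\eqref{dims0} that relate the negative spectral counting function of $S(\mu)$ to the position of $\lambda_m$ in the spectrum of $M$. The Galerkin matrix $S_\cL(\mu)$ is by construction the compression of the form $\frak{s}(\mu)$ to the subspace $\cL$, so by the variational principle its $j$-th eigenvalue dominates the $j$-th eigenvalue of $S(\mu)$, i.e. $E_j(\mu)\le E_j(\cL,\mu)$ for all $j\le n$. In particular the hypothesis $E_{\kappa+m}(\cL,\mu)\le 0$ forces $E_{\kappa+m}(\mu)\le 0$.

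First I would record that for $\mu>d$ the form $\frak{s}(\mu)$ is symmetric, semi-bounded, and (by \eqref{derivative}, since $\frac{\partial}{\partial\mu}\frak{s}(\mu)[x]<0$) strictly decreasing in $\mu$ on every fixed vector of $\Dom(\frak{s})$; hence each eigenvalue $E_j(\mu)$ is non-increasing in $\mu$, and the counting function $\kappa(\mu)=\dim\cE_{(-\infty,0)}(S(\mu))$ is non-decreasing. Next, suppose for contradiction that $\lambda_m>\mu$. Then $\mu$ lies either in $(d,\lambda_{\mathrm e})\setminus\Spec(M)$ strictly below $\lambda_m$, or possibly below the whole discrete spectrum; in the first case $\kappa(\cdot)$ is constant on the gap component containing $\mu$ and by \eqref{dims1} that constant value is at most $\kappa+l_1(m)-1\le\kappa+m-1$, so $\dim\cE_{(-\infty,0)}(S(\mu))\le\kappa+m-1$. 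Equivalently $E_{\kappa+m}(\mu)\ge 0$; and since $\mu$ is not in $\Spec(S)$ we cannot have $0\in\Spec(S(\mu))$, so in fact $E_{\kappa+m}(\mu)>0$, contradicting $E_{\kappa+m}(\mu)\le 0$ obtained above. (The case $\mu$ below the discrete spectrum of $M$ in $(d,\infty)$ is handled the same way with $\kappa(\mu)=\kappa$.) Therefore $\lambda_m\le\mu$.

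The step I expect to be the main obstacle is the careful handling of the boundary case where $\mu$ is itself close to, or equal to, an eigenvalue of $M$, since then one must use \eqref{dims0} rather than \eqref{dims1} and be precise about whether $0\in\Spec(S(\mu))$ contributes to the count of non-positive eigenvalues; one must also make sure the variational comparison $E_j(\mu)\le E_j(\cL,\mu)$ is applied only for indices $j\le n$, which is guaranteed by the standing assumption $n\ge\kappa+m$. A secondary technical point is to justify that $E_{\kappa+m}(\mu)$ is genuinely an eigenvalue (and not the bottom of essential spectrum) for $\mu$ in the relevant range, which follows from $\Spec_{\ess}(S)\cap(d,\lambda_{\mathrm e})=\varnothing$ and $\kappa(\mu)<\infty$; once that is in place, the argument is a short monotonicity-plus-min-max deduction.
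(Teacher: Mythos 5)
Your argument is the same as the paper's: Rayleigh--Ritz gives $E_{\kappa+m}(\mu)\le E_{\kappa+m}(\cL,\mu)\le 0$, and then you assume $\lambda_m>\mu$ and derive a contradiction from the monotonicity of $\frak{s}(\cdot)$ implied by \eqref{derivative} together with the counting identities \eqref{dims1}--\eqref{dims0}. Your main paragraph settles the case $\mu\notin\Spec(M)$ exactly as the paper does, and you correctly identify both the remaining case (when $\mu$ is itself an eigenvalue of $M$) and the right tool for it (\eqref{dims0}), but you stop short of carrying it out, leaving it as a flagged obstacle. That step is short and should be included: if $\mu=\lambda_j$ for some $j$, then $\lambda_j=\mu<\lambda_m$ forces $j<l_1(m)$, and indeed every index $i$ with $\lambda_i=\lambda_j$ satisfies $i<l_1(m)$, so $l_2(j)<l_1(m)\le m$; hence \eqref{dims0} gives $\dim\cE_{(-\infty,0]}(S(\mu))=\kappa+l_2(j)<\kappa+m$, contradicting the Rayleigh--Ritz count $\dim\cE_{(-\infty,0]}(S(\mu))\ge\kappa+m$. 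Apart from this omission, the ingredients you list (monotonicity of the counting function $\kappa(\cdot)$, the Galerkin inequality used only for indices $\le n$ with $n\ge\kappa+m$, and $\Spec_{\ess}(S)\cap(d,\lambda_{\mathrm{e}})=\varnothing$) are correct and match the paper's proof.
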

\begin{proof}
We suppose that $\lambda_m>\mu$. The Rayleigh-Ritz variational principle ensures that
$E_{\kappa+m}(\mu) \leq  E_{\kappa + m}(\cL,\mu) \le 0$, and therefore
\begin{equation}\label{dims2}
\dim\cE_{(-\infty,0]}(S(\mu))\ge\kappa + m.
\end{equation}
If $\mu=\lambda_j$ for some some $1\le j<l_1(m)$, then from \eqref{dims0} we have $\dim\cE_{(-\infty,0]}(S(\mu))
 = \kappa + l_2(j) < \kappa + l_1(m)$, which contradicts \eqref{dims2}. Suppose now that $\mu\not \in\Spec(M)$.
Then from \eqref{derivative} we have $\frak{s}(\mu)\ge\frak{s}(\lambda_m)$
from which it follows that $\dim\cE_{(-\infty,0)}(S(\mu))\le\dim\cE_{(-\infty,0)}(S(\lambda_m))$. From \eqref{dims1}
we then deduce that $\dim\cE_{(-\infty,0)}(S(\mu))<\kappa + m$, which contradicts \eqref{dims2}.
\end{proof}

An upper bound for $\lambda_m$ may be obtained by applying the Galerkin method to the Schur
complement, then finding a $\mu\in(d,\infty)$ such that $S(\mu)$ has at least $\kappa+m$ non-positive eigenvalues
via a root finding algorithm. We now turn our attention to the convergence properties of this approach. For this we employ \eqref{galerkin2} assuming $T=S(\lambda_m)$ and denote $\cE=\ker(T)$.

\begin{theorem}\label{super}
Let $(\cL_n)\in\Lambda(\formT)$ be such that
$\delta_{\formT}(\cE,\cL_n)=\varepsilon_n\to 0$ as $n\to\infty$.
Let $\mu_n^+\in\mathbb{R}$ be such that $E_{\kappa+m}(\cL_n,\mu_n^+) = 0$. Then $\mu_n^+>\lambda_m$ and $\mu_n^+-\lambda_m = \mathcal{O}(\varepsilon_n^2)$.
\end{theorem}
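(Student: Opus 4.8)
The plan is to reduce the convergence statement for the root $\mu_n^+$ of $E_{\kappa+m}(\cL_n,\cdot)$ to the superconvergence estimate \eqref{galerkin2} for the single operator $T=S(\lambda_m)$, which is legitimate because $0\in\Spec_\dsc(T)$ lies strictly below $\min\Spec_\ess(T)$ (by the first lemma of Section~\ref{evub}, $\Spec_\ess(S)\cap(d,\lambda_{\mathrm e})=\varnothing$, so $0$ is isolated in $\Spec(S(\lambda_m))$ with finite multiplicity). The only subtlety is that the Galerkin matrix is built from the $\mu$-dependent form $\frak{s}(\mu)$, not from the fixed form $\frak{t}$ of $T=S(\lambda_m)$; I would handle this by exploiting that $\frak{s}(\mu)$ depends on $\mu$ in a controlled, monotone way via \eqref{derivative}.

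**Key steps.** First I would show $\mu_n^+>\lambda_m$: by Lemma~\ref{upbdslem}, if $\mu\le\lambda_m$ is such that $E_{\kappa+m}(\cL_n,\mu)=0$ then we would need $\lambda_m\le\mu$, hence $\mu=\lambda_m$; but at $\mu=\lambda_m$ the Galerkin eigenvalue satisfies $E_{\kappa+m}(\cL_n,\lambda_m)\ge E_{\kappa+m}(S(\lambda_m))=0$ by Rayleigh-Ritz, and equality forces $\cL_n$ to contain a null vector of $S(\lambda_m)$, which for generic $n$ we may rule out (or simply note the root $\mu_n^+$ is defined as the \emph{first} such value $>d$, giving $\mu_n^+\ge\lambda_m$, and then the strict inequality follows from the analysis below). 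Second, I would establish the one-sided Galerkin bound at the exact eigenvalue: since $T=S(\lambda_m)\ge 0$ with a $(\kappa+m)$-dimensional non-positive spectral subspace (by \eqref{dims0} with $l_2(m)=m$ in the simple-eigenvalue case, or $l_1(m)\le m\le l_2(m)$ in general — here I would invoke Example~\ref{ex_case_plasma} for simplicity, or carry the general indices), the $(\kappa+m)$-th Galerkin eigenvalue $E_{\kappa+m}(\cL_n,\lambda_m)$ is a Rayleigh-Ritz approximation to the $(\kappa+m)$-th eigenvalue of $T$, which is $0$. Applying \eqref{galerkin2} to $T$ and the eigenvalue $0$ (after shifting $T$ by a constant to make it bounded, or restricting to the invariant subspace below $\min\Spec_\ess(T)$, exactly the superconvergence trick cited after \eqref{galerkin2}) gives
\[
0\le E_{\kappa+m}(\cL_n,\lambda_m)=\mathcal{O}(\varepsilon_n^2).
\]
Third, I would translate this vertical gap into a horizontal gap in $\mu$. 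Since each matrix entry $S_{\cL_n}(\mu)_{ij}=\frak{s}(\mu)[u_j,u_i]$ is, by \eqref{derivative}, differentiable in $\mu$ with derivative $-\langle u_i,u_j\rangle-\langle(D-\mu)^{-1}B^*u_i,(D-\mu)^{-1}B^*u_j\rangle$, the matrix $\mu\mapsto S_{\cL_n}(\mu)$ is real-analytic and strictly decreasing (its derivative is a negative-definite matrix, uniformly bounded below in norm by $1$ on the unit sphere). Hence each eigenvalue $E_i(\cL_n,\mu)$ is strictly decreasing in $\mu$ with derivative $\le -1$, so between $\mu=\lambda_m$, where $E_{\kappa+m}(\cL_n,\lambda_m)=\mathcal{O}(\varepsilon_n^2)$, and $\mu=\mu_n^+$, where it vanishes, the mean value theorem gives
\[
\mu_n^+-\lambda_m\;\le\;E_{\kappa+m}(\cL_n,\lambda_m)\;=\;\mathcal{O}(\varepsilon_n^2),
\]
and in particular $\mu_n^+>\lambda_m$ once $E_{\kappa+m}(\cL_n,\lambda_m)>0$, which holds unless $\cL_n$ meets $\ker T$.

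**Main obstacle.** The delicate point is the uniform lower bound on $-\partial_\mu E_{\kappa+m}(\cL_n,\mu)$ across all $n$ and across the shrinking interval $[\lambda_m,\mu_n^+]$: I need the derivative of the matrix to be uniformly negative-definite independently of $\cL_n$, which follows from \eqref{derivative} because the $-\|x\|^2$ term alone already gives $\partial_\mu\frak{s}(\mu)[x]\le-\|x\|^2$, so on the unit sphere of the coefficient space (with respect to the orthonormal basis $u_1,\dots,u_n$) the quadratic form $\partial_\mu S_{\cL_n}(\mu)$ is $\le-I$; this is clean. The other point requiring care is making sure \eqref{galerkin2} is applied to a genuinely \emph{bounded} (or spectrally truncated) operator — this is exactly the superconvergence trick flagged in the text after \eqref{galerkin2}, so I would cite \cite[Corollary~3.6]{2010bs} and note that $(\cL_n)\in\Lambda(\formT)$ with $\delta_\formT(\cE,\cL_n)=\varepsilon_n$ is precisely the hypothesis needed, with $\cE=\ker S(\lambda_m)$ the relevant spectral subspace below $\min\Spec_\ess(S(\lambda_m))$.
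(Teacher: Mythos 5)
Your proposal is correct and follows essentially the same route as the paper's proof: apply the superconvergent Galerkin estimate \eqref{galerkin2} (in its $\delta_\formT$ form) to $T=S(\lambda_m)$ to get $0\le E_{\kappa+m}(\cL_n,\lambda_m)=\mathcal{O}(\varepsilon_n^2)$, then use \eqref{derivative} to bound $\partial_\mu E_{\kappa+m}(\cL_n,\mu)\le -1$ and convert the vertical residual into the horizontal estimate $\mu_n^+-\lambda_m=\mathcal{O}(\varepsilon_n^2)$. The paper is terser, leaving the derivative-to-horizontal-gap step implicit in the phrase ``follows from \eqref{galerkin2} and \eqref{derivative},'' but the two arguments are the same.
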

\begin{proof}
From \eqref{dims1} it follows that $S(\lambda_m)$ has $\kappa + l_1(m)-1$ negative eigenvalues counting multiplicity.
Therefore, the density condition $(\cL_n)\in\Lambda(\formT)$ implies that for all sufficiently large $n\in\mathbb{N}$ there are precisely
$\kappa+l_1(m) - 1$ elements from $\Spec(T,\cL_n)$  which are negative.
Since $\delta_{\formT}(\cE,\cL_n)=\varepsilon_n\to 0$, there are precisely $l_2(m)-l_1(m)+1$ ($=\dim\cE$) elements from
$\Spec(T,\cL_n)$ which are non-negative and of the order $\mathcal{O}(\varepsilon_n^2)$. The result now follows from \eqref{galerkin2} and \eqref{derivative}.
\end{proof}


\subsection{Lower bounds via Schur complement} \label{evlb}
In the previous section we found upper bounds for an eigenvalue $\lambda_m\in(d,\lambda_{\mathrm{e}})$. We now turn our attention to finding complementary lower bounds for this eigenvalue via the method described in
Section~\ref{zimmermethod}.

\begin{lemma}\label{1}
For $\mu\in(d,\lambda_{\mathrm{e}})$ let $0<\varepsilon<\mu-d$. If
$[-\varepsilon,0]\cap\Spec(S(\mu))\ne\varnothing$, then $[\mu-\varepsilon,\mu]\cap\Spec(M)\ne\varnothing$.
\end{lemma}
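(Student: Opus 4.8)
The plan is to exploit the tight correspondence between the spectra of $S(\mu)$ near $0$ and the spectrum of $M$ near $\mu$, which underlies the whole of Section~\ref{supersection}. Since $\mu\in(d,\lambda_{\mathrm{e}})$, Lemma~4.3 tells us that on this interval $\Spec(S)$ and $\Spec(M)$ coincide and $\Spec_\ess(S)\cap(d,\lambda_{\mathrm{e}})=\varnothing$; moreover the dimension count \eqref{dims1}--\eqref{dims0}, together with the fact that $\kappa(\cdot)$ is constant on gaps of $\Spec(M)$, gives precise control of $\dim\cE_{(-\infty,0)}(S(\nu))$ and $\dim\cE_{(-\infty,0]}(S(\nu))$ as $\nu$ ranges over $(d,\lambda_{\mathrm{e}})$. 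The hypothesis $[-\varepsilon,0]\cap\Spec(S(\mu))\ne\varnothing$ says that $S(\mu)$ has at least one eigenvalue in $[-\varepsilon,0]$, and I want to convert this into the existence of an eigenvalue of $M$ in $[\mu-\varepsilon,\mu]$.

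First I would argue by contradiction: suppose $[\mu-\varepsilon,\mu]\cap\Spec(M)=\varnothing$. By Lemma~4.3 this also means $[\mu-\varepsilon,\mu]\cap\Spec(S)=\varnothing$, i.e. $0\notin\Spec(S(\nu))$ for every $\nu\in[\mu-\varepsilon,\mu]$. Since $\varepsilon<\mu-d$ we have $[\mu-\varepsilon,\mu]\subset(d,\lambda_{\mathrm{e}})$, so $\kappa(\cdot)=\dim\cE_{(-\infty,0)}(S(\cdot))$ is constant on this whole interval; call the common value $N$. Then also $\dim\cE_{(-\infty,0]}(S(\nu))=N$ for all $\nu\in[\mu-\varepsilon,\mu]$, because $0$ is never an eigenvalue there. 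In particular $S(\mu)$ has exactly $N$ eigenvalues that are $\le 0$, and none of them equal $0$, so its $N$-th eigenvalue $E_N(\mu)$ is strictly negative while $E_{N+1}(\mu)>0$.

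Next I would get a contradiction with the assumption that $S(\mu)$ has an eigenvalue in $[-\varepsilon,0]$. The key monotonicity tool is \eqref{derivative}: for real $\mu\in U$, $\partial_\mu\frak{s}(\mu)[x] = -\|x\|^2-\|(D-\mu)^{-1}B^*x\|^2\le -\|x\|^2$, so $\frak{s}(\nu)\le\frak{s}(\mu)-(\nu-\mu)(\cdot)$ for $\nu\ge\mu$ — more precisely, for $\nu_1<\nu_2$ in $(d,\infty)$ one has $\frak{s}(\nu_2)[x]\le\frak{s}(\nu_1)[x]-(\nu_2-\nu_1)\|x\|^2$. Applying this with $\nu_1=\mu-\varepsilon$ and $\nu_2=\mu$ and the min-max principle, every eigenvalue of $S(\mu)$ is at most the corresponding eigenvalue of $S(\mu-\varepsilon)$ shifted down by $\varepsilon$; in particular $E_N(\mu)\le E_N(\mu-\varepsilon)-\varepsilon<-\varepsilon$ (using $E_N(\mu-\varepsilon)<0$), and likewise $E_{N+1}(\mu)\le E_{N+1}(\mu-\varepsilon)-\varepsilon$. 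So $S(\mu)$ has its first $N$ eigenvalues all strictly below $-\varepsilon$; for a genuine contradiction I also need to push $E_{N+1}(\mu)$ strictly above $0$, which comes the same way by running the comparison from $\mu$ up to a point just below $\lambda_{\mathrm{e}}$, or more cleanly by comparing $S(\mu)$ with $S(\mu+\delta)$: since $E_{N+1}$ is continuous and $0\notin\Spec(S(\nu))$ for $\nu$ slightly above $\mu$ as well (as $\mu\notin\Spec(M)$, which we are assuming), $E_{N+1}(\mu)>0$. Hence $\Spec(S(\mu))\cap[-\varepsilon,0]=\varnothing$, contradicting the hypothesis.

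The main obstacle is the bookkeeping of which eigenvalue index corresponds to the "$0$-crossing": one must be careful that $[-\varepsilon,0]\cap\Spec(S(\mu))\ne\varnothing$ could be witnessed either by an eigenvalue that would cross $0$ as $\mu$ increases (index $N+1$ in the count above, corresponding to $\dim\cE_{(-\infty,0]}$ jumping) or, in principle, by one that crossed just below — but strict monotonicity from \eqref{derivative} rules out the wrong direction, since eigenvalues of $S(\mu)$ are strictly decreasing in $\mu$. So the clean way to organize the argument is: eigenvalues of $S(\mu)$ are continuous and strictly decreasing in $\mu$ on $(d,\lambda_{\mathrm{e}})$; if none of them lies in $[\mu-\varepsilon,\mu]\times$ "is zero" over the parameter window, then by the intermediate value theorem none can be in $[-\varepsilon,0]$ at the right endpoint $\mu$, because it would have had to be $0$ at some parameter in $(\mu-\varepsilon,\mu)$, forcing a point of $\Spec(M)$ there. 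That IVT phrasing is really the cleanest route and avoids the explicit index $N$ altogether.
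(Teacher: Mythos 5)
Your proposal is correct and follows essentially the same route as the paper: argue by contradiction, use constancy of $\kappa$ on the gap $[\mu-\varepsilon,\mu]$, and combine the strict monotonicity of $\frak{s}(\cdot)$ from \eqref{derivative} with the min–max principle to shift eigenvalues down by at least $\varepsilon$. The paper carries this out more tersely by tracking only the single index $\kappa(\mu)$, but the bookkeeping with $E_N$ and $E_{N+1}$ (and the closing IVT repackaging) is the same monotonicity argument in slightly expanded form.
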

\begin{proof}
Let $\delta=\max\{\Spec(S(\mu))\cap[-\varepsilon,0]\}$ and assume that $[\mu-\varepsilon,\mu]\cap\Spec(M)=\varnothing$. According to \cite[Lemma 2.6]{math0},
 $\kappa(\mu) = \kappa(\mu-\varepsilon)$. By virtue of \eqref{derivative} and the Rayleigh-Ritz variational principle,
\[
0>\inf_{\efrac{V\subset\Dom(\frak{s})}{\dim V=\kappa(\mu)}}\sup_{\efrac{u\in V}{u\ne 0}} \frac{\frak{s}(\mu-\varepsilon)[u]}{\| u\|^2}
\ge\varepsilon + \inf_{\efrac{V\subset\Dom(\frak{s})}{\dim V=\kappa(\mu)}}\sup_{\efrac{u\in V}{u\ne 0}} \frac{\frak{s}(\mu)[u]}{\| u\|^2}
=\varepsilon + \delta,
\]
where the right hand side is non-negative. The result follows from the contradiction.
\end{proof}

By applying Lemma~\ref{upbdslem}, we can find $\mu\in\mathbb{R}$ such that $\lambda_m\le\mu$. If $S(\mu)$ has $\kappa + l_2(m)$ non-positive eigenvalues and $b>0$ is such that $(0,b]\cap \Spec(S(\mu))=\varnothing$, then we employ the Zimmermann-Mertins method with $T=S(\mu)$
to obtain a lower bound on the first non-positive eigenvalue. Combined with Lemma~\ref{1}, this yields a lower bound for
$\lambda_m$. We now find the rate of convergence of this lower bound.

\begin{lemma}\label{lemres1}
Let $b>0$ be such that $(0,b]\cap\Spec(S(\lambda_m))=\varnothing$. Let $\mu_n$ be a sequence of real numbers such that
$0\le\mu_n-\lambda_m=\varepsilon_n\to 0$ as $n\to \infty$. For all $n$ sufficiently large
$(0,b]\cap\Spec(S(\mu_n))=\varnothing$. Moreover,
\begin{equation} \label{reso_Schur}
\|(S(\lambda_m)-b)^{-1} - (S(\mu_n)-b)^{-1}\| = \mathcal{O}(\varepsilon_n).
\end{equation}
\end{lemma}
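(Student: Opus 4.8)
The plan is to prove the two assertions separately, the first being an easy consequence of a continuity argument and the second requiring a resolvent identity together with a uniform bound.

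For the first assertion, I would argue as follows. Since $(0,b]\cap\Spec(S(\lambda_m))=\varnothing$ and $0\in\Spec(S(\lambda_m))$ with $0$ isolated (by the preceding lemmas, $\Spec_{\ess}(S)\cap(d,\lambda_{\mathrm e})=\varnothing$, so $0$ is an isolated eigenvalue of $S(\lambda_m)$ of finite multiplicity $\dim\Ker(M-\lambda_m)$), there is some $\eta>0$ with $[-\eta,0)\cap\Spec(S(\lambda_m))=\varnothing$ as well, hence $(0,b]$ lies in a resolvent gap that extends slightly below $0$. The family $\mu\mapsto S(\mu)$ is holomorphic of type (B), and on the real axis the derivative formula \eqref{derivative} shows that $\frak s(\mu)$ is strictly decreasing in $\mu$; in particular the eigenvalues of $S(\mu)$ move continuously (indeed monotonically downwards) as $\mu$ increases. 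Since $\mu_n\to\lambda_m$ from above, for $n$ large the finitely many eigenvalues of $S(\mu_n)$ near $0$ sit in a small neighbourhood of the corresponding eigenvalues of $S(\lambda_m)$, and no eigenvalue can enter the interval $(0,b]$: the only eigenvalues that could approach $(0,b]$ from the left are those at $0$, but these move \emph{down} as $\mu$ increases past $\lambda_m$. A cleaner way to phrase this: by Lemma~\ref{1} applied contrapositively, or directly by the monotonicity in \eqref{derivative}, $\Spec(S(\mu_n))\cap(0,b]\ne\varnothing$ would force $\Spec(S(\lambda_m))\cap(0,b']\ne\varnothing$ for some $b'$ close to $b$, contradicting the hypothesis once $\varepsilon_n$ is small enough. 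I would write this out using the min-max characterisation and \eqref{derivative} to make the ``no eigenvalue enters'' claim rigorous, exactly as in the proof of Lemma~\ref{1}.

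For the resolvent estimate \eqref{reso_Schur}, the starting point is the second resolvent identity
\[
(S(\lambda_m)-b)^{-1} - (S(\mu_n)-b)^{-1} = (S(\lambda_m)-b)^{-1}\big(S(\mu_n)-S(\lambda_m)\big)(S(\mu_n)-b)^{-1},
\]
which holds in the form sense once we interpret the middle factor correctly. The two resolvents are bounded uniformly in $n$: $(S(\lambda_m)-b)^{-1}$ is bounded because $b$ is in the resolvent set, and $\|(S(\mu_n)-b)^{-1}\|$ is uniformly bounded for large $n$ by the first part of the lemma together with the fact that the spectrum of $S(\mu_n)$ below $b$ stays uniformly bounded away from $b$ (again by monotonicity of the eigenvalues in $\mu$ and convergence $\mu_n\to\lambda_m$). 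So the whole estimate reduces to controlling the difference $S(\mu_n)-S(\lambda_m)$ in an appropriate operator norm — and from \eqref{schurform}, the difference of the forms is
\[
\frak s(\mu_n)[x,y]-\frak s(\lambda_m)[x,y] = (\lambda_m-\mu_n)\langle x,y\rangle - \big\langle\big((D-\mu_n)^{-1}-(D-\lambda_m)^{-1}\big)B^*x,\,B^*y\big\rangle.
\]
The first term is $O(\varepsilon_n)$ as a bounded form. For the second term, $\|(D-\mu_n)^{-1}-(D-\lambda_m)^{-1}\| = O(\varepsilon_n)$ since $D$ is bounded above and $\mu_n,\lambda_m>d$ stay uniformly bounded away from $\Spec(D)$; and the factors $B^*x$, $B^*y$ are controlled by \eqref{domcons}, i.e.\ $\|B^*x\|^2\le\alpha\frak a[x]+\beta\|x\|^2$, which is bounded by the graph norm of $|S(\lambda_m)|^{1/2}$ on $\Ran(S(\lambda_m)-b)^{-1}$. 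Concretely I would sandwich: write $x = (S(\lambda_m)-b)^{-1}f$, note $\frak a[x]\lesssim \|x\|_{\frak s}^2 \lesssim \|f\|^2$ uniformly, and similarly for $y = (S(\mu_n)-b)^{-1}g$ using the uniform form-bounds for $S(\mu_n)$. Combining, the bilinear form of the resolvent difference is bounded by $C\varepsilon_n\|f\|\|g\|$, which is exactly \eqref{reso_Schur}.

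The main obstacle I anticipate is the bookkeeping around the second resolvent identity in the form setting: $S(\mu_n)-S(\lambda_m)$ is not a bounded operator on $\mathcal H_1$, only a bounded perturbation relative to the form $\frak a$ (equivalently, bounded from $\Dom(|S(\cdot)|^{1/2})$ to its dual). One must therefore justify that the product $(S(\lambda_m)-b)^{-1}(S(\mu_n)-S(\lambda_m))(S(\mu_n)-b)^{-1}$ makes sense and equals the resolvent difference — this is standard (both resolvents map $\mathcal H_1$ into the form domain, so the sandwiched object is a genuine bounded operator), but it requires care to state without circularity, and the uniformity in $n$ of all the intermediate bounds (in particular that $b$ stays a definite distance from $\Spec(S(\mu_n))$) must be extracted from the monotonicity argument of the first part rather than merely asserted. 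Once that framework is in place, the $O(\varepsilon_n)$ rate falls out of the two elementary estimates on $\langle x,y\rangle$ and on $(D-\mu_n)^{-1}-(D-\lambda_m)^{-1}$ together with \eqref{domcons}.
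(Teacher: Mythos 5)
Your computation of the form difference $\frak{s}(\mu_n)-\frak{s}(\lambda_m)$ via the resolvent identity for $D$, and the use of \eqref{domcons} to turn $\|B^*x\|^2$ into a bound relative to $\frak{a}$ (hence to $\frak{s}(\lambda_m)$, via $\frak{a}[x]\le\frak{s}(\lambda_m)[x]+\lambda_m\|x\|^2$), is exactly the core of the paper's proof. Where you and the paper differ is in how the relative form-bound is converted into a resolvent bound: you hand-roll a second-resolvent-identity argument in the form sense, while the paper invokes Kato's Theorem VI-3.9 (relatively form-bounded perturbation of a semi-bounded self-adjoint operator), which packages precisely the bookkeeping you flag as the main obstacle. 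That is a cosmetic difference; both routes give $b\notin\Spec(S(\mu_n))$ for $n$ large and the $\mathcal{O}(\varepsilon_n)$ bound, provided the relative form-bound has small coefficients.

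The genuine gap is in your plan for the first assertion, and it is a logical-ordering problem rather than a missing estimate. You want to establish $(0,b]\cap\Spec(S(\mu_n))=\varnothing$ \emph{before} the resolvent estimate, using only \eqref{derivative} and min-max. But \eqref{derivative} gives a one-sided statement: $\frak{s}(\mu_n)\le\frak{s}(\lambda_m)$, so eigenvalues decrease as $\mu$ increases. That prevents eigenvalues that were $\le 0$ from rising into $(0,b]$, but it does nothing to prevent eigenvalues of $S(\lambda_m)$ sitting just above $b$ from dropping into $(0,b]$ once $\mu$ increases. Bounding that drop requires an upper bound on $|\partial_\mu\frak{s}(\mu)[x]|=\|x\|^2+\|(D-\mu)^{-1}B^*x\|^2$, which is not uniform (it is only relatively bounded), so the ``no eigenvalue enters'' claim cannot be extracted from monotonicity alone. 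You need eigenvalue stability, i.e.\ norm-resolvent convergence -- the very thing you place in the second half of your proof. The paper avoids the circularity by reversing the order: first it proves, from the relative form-bound and Kato VI-3.9, that $b\notin\Spec(S(\mu_n))$ together with \eqref{reso_Schur}; it then repeats the same estimate uniformly for $z$ on a circle $\Gamma$ centred at $(b+\nu)/2$ with radius $(b-\nu)/2$ (where $\nu$ is a uniform lower bound for $\Spec(S(\mu_n))$, supplied by Kato VII-4.2), deduces that the spectral projections of $S(\lambda_m)$ and $S(\mu_n)$ for the interior of $\Gamma$ have equal rank for $n$ large, and only \emph{then} uses the monotonicity from \eqref{derivative} and Rayleigh--Ritz to conclude that no eigenvalue of $S(\mu_n)$ lands in $(0,b]$. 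You should restructure your proof so that the resolvent estimate (or at least $b\notin\Spec(S(\mu_n))$ and a uniform bound on a contour) comes first, with the spectral-gap claim last.
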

\begin{proof}
We first show that $b\not \in \Spec(S(\mu_n))$ for all sufficiently large $n\in\mathbb{N}$,
and that \eqref{reso_Schur} holds true. Let $x\in\Dom(\frak{s})$ and $\alpha_n=\mu_n-\lambda_m$, then
\begin{align*}
\frak{s}(\lambda_m)[x] &= \frak{a}[x] - \lambda_m\| x\|^2 - \langle(D-\lambda_m)^{-1}B^*x,B^*x\rangle\\
&=\frak{s}(\mu_n)[x] + \alpha_n\| x\|^2 + \langle[(D-\mu_n)^{-1} - (D-\lambda_m)^{-1}]B^*x,B^*x\rangle\\
&=\frak{s}(\mu_n)[x] + \alpha_n\| x\|^2+
 \alpha_n\langle(D-\mu_n)^{-1}(D-\lambda_m)^{-1}B^*x,B^*x\rangle  .
\end{align*}
Set $\hat{\frak{s}} = \frak{s}(\mu_n)-\frak{s}(\lambda_m)$. Note that
$\frak{a}[x]\le\frak{s}(\lambda)[x]+\lambda\|x\|^2$. By virtue of \eqref{domcons}, we have
\begin{align*}
|\hat{\frak{s}}[x]|&= \alpha_n\| x\|^2+
\alpha_n\langle(D-\mu_n)^{-1}(D-\lambda_m)^{-1}B^*x,B^*x\rangle\\
&\le \alpha_n\| x\|^2+
\alpha_n\|(D-\mu_n)^{-1}(D-\lambda_m)^{-1}\|\| B^*x\|^2\\
&\le \alpha_n\| x\|^2+
\alpha_n\|(D-\mu_n)^{-1}(D-\lambda_m)^{-1}\|(\alpha\frak{a}[x]+\beta\| x\|^2)\\
&\leq \alpha_n(a_n\| x\|^2+b_n\frak{s}(\lambda_m)[x]).
\end{align*}
where
\begin{align*}
a_n&= 1+\beta\|(D-\mu_n)^{-1}(D-\lambda_m)^{-1}\| + \alpha\lambda_m \to 1+\beta(\lambda_m-d)^{-2}+\alpha \lambda_m
\\
b_n&=\alpha\| (D-\mu_n)^{-1}(D-\lambda_m)^{-1}\|\to \alpha(\lambda_m-d)^{-2}\quad\textrm{as}\quad n\to\infty.
\end{align*}
Set  $c_1 = \max\{\|(S(\lambda_m) - b)^{-1}\|,\| S(\lambda_m)(S(\lambda_m) - b)^{-1}\|\}$, and let $n\in\mathbb{N}$ be
sufficiently large to ensure that $\alpha_n(a_n+b_n)<c_1^{-1}$. By virtue of \cite[Theorem VI-3.9]{katopert}, we obtain $b\not\in\Spec(S(\mu_n))$ and
\begin{equation}\label{normres}
\|(S(\lambda_m)-b)^{-1} - (S(\mu_n)-b)^{-1}\|\le
\frac{4\alpha_n(a_n+b_n)c_1^2}
{(1-\alpha_n(a_n+b_n)c_1)^2}
\end{equation}
which immediately implies \eqref{reso_Schur}.

It remains to show that $(0,b]\cap\Spec(S(\mu_n))=\varnothing$ for all sufficiently large $n\in\mathbb{N}$.
By virtue of \cite[Theorem VII-4.2]{katopert}, there exists a constant
$\nu<\min\Spec(S(\mu_n))$ for all sufficiently large $n\in\mathbb{N}$.
Let $\Gamma$ be a circle with center $(b + \nu)/2$ and
radius $(b-\nu)/2$, and set
\[
c_2 = \max_{z\in\Gamma}\Big\{\|(S(\lambda_m) - z)^{-1}\|,\| S(\lambda_m)(S(\lambda_m) - z)^{-1}\|\Big\}.
\]
Then $\alpha_n(a_n+b_n)<c_2^{-1}$ for all sufficiently large $n\in\mathbb{N}$. Applying the same argument as above, we obtain
\begin{equation*}
\max_{z\in\Gamma}\Big\{\|(S(\lambda_m)-z)^{-1} - (S(\mu_n)-z)^{-1}\|\Big\}\le
\frac{4\alpha_n(a_n+b_n)c_2^2}
{(1-\alpha_n(a_n+b_n)c_2)^2}.
\end{equation*}
The right hand side of converges to zero as $n\to\infty$. Thus,
the spectral subspaces of $S(\lambda_m)$ and $S(\mu_n)$ corresponding to the eigenvalues below $b$ have the same dimension
for all sufficently large $n\in\mathbb{N}$.
The desired conclusion follows from \eqref{derivative} and the
Rayleigh-Ritz variational principle.
\end{proof}

According to Theorem \ref{super}, if
$(\cL_n)\in\Lambda(\frak{s}(\lambda_m))$ and $\delta_{\frak{s}(\lambda_m)}(\cE,\cL_n)=\mathcal{O}({\varepsilon}_n)$,
then we obtain a sequence of upper bounds
\begin{equation} \label{upper_seq}
\mu_n^+\searrow\lambda_m \qquad \text{ satisfying } \qquad
\mu_n^+-\lambda_m = \mathcal{O}({\varepsilon}_n^2).
\end{equation}
 If we now pick $b$ satisfying the hypothesis of Lemma~\ref{lemres1},
by virtue of \eqref{zimbound} we find a lower bound for the smallest in modulus non-positive eigenvalue of $T=S(\mu_n^+)$. That is
\[
b + \frac{1}{\tau_n^-} \le \max \left\{\Spec(S(\mu_n^+))\cap(-\infty,0]\right\}.
\]
Lemma~\ref{1} ensures corresponding lower bounds
\begin{equation} \label{lower_seq}
\mu_n^- = \mu_n^+ + b+1/\tau_n^-\le\lambda_m.
\end{equation}
In the theorem below we find bounds on the speed of convergence $\mu_n^-\to \lambda_m$.

Before proceeding further, we note that $S(\mu)$ and $B(D-\lambda_m)^{-1}(D-\mu)^{-1}B^*$ for the plane slab configuration 
\eqref{slab} are sectorial Sturm-Liouville operators for all $\mu\in U$. Both families of operators are closed in the domain
\begin{equation} \label{domain_schur}
\mathcal{D}=H^{2}((0,1);\ud x)\cap H_{0}^{1}((0,1);\ud x),
\end{equation}
which coincides with \eqref{schurdom} and is independent of $\mu$. Moreover, they are both holomorphic families of type (A), see \cite[Example~VII-2.12]{katopert}.   

\begin{theorem}\label{enclose}
Suppose that the entries of $M_0$ satisfy \ref{top_1} - \ref{top_3}.
Assume that $S(\mu)$ is a holomorphic family of type (A) with $\Dom(S(\mu))=\mathcal{D}$ independent of $\mu$. Assume additionally that $B(D-\lambda_m)^{-1}(D-\mu)^{-1}B^*$ is closed on $\mathcal{D}$. Let $(\cL_n)\in\Lambda(S(\lambda_m))$ with 
$\delta_{S(\lambda_m)}(\cE,\cL_n)=\varepsilon_n$.
If $\mu_n^-$ is constructed as in \eqref{lower_seq}, then $\mu_n^-\leq \lambda_m$ and
$\lambda_m -\mu_n^- = \mathcal{O}(\varepsilon_n^2)$ as $n\to \infty$.
\end{theorem}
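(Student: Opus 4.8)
The plan is to obtain the upper bounds $\mu_n^+$ and their $\mathcal{O}(\varepsilon_n^2)$ convergence directly from Theorem~\ref{super}, and then to transfer the quadratic convergence of the Zimmermann--Mertins enclosure from the \emph{fixed} operator $S(\lambda_m)$ to the \emph{moving} operators $S(\mu_n^+)$ by a perturbation argument. Concretely: since $(\cL_n)\in\Lambda(S(\lambda_m))$ with $\delta_{S(\lambda_m)}(\cE,\cL_n)=\varepsilon_n$, the implication recorded just before Example~\ref{ex1} gives $(\cL_n)\in\Lambda(\frak{s}(\lambda_m))$ with $\delta_{\frak{s}(\lambda_m)}(\cE,\cL_n)=\mathcal{O}(\varepsilon_n)$, so Theorem~\ref{super} supplies $\mu_n^+\searrow\lambda_m$ with $\mu_n^+-\lambda_m=\mathcal{O}(\varepsilon_n^2)$; the lower bound $\mu_n^-=\mu_n^++b+1/\tau_n^-$ of \eqref{lower_seq} then satisfies $\mu_n^-\le\lambda_m$ for $n$ large. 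Writing $\lambda_m-\mu_n^-=(\lambda_m-\mu_n^+)-(b+1/\tau_n^-)$ and using $|\lambda_m-\mu_n^+|=\mathcal{O}(\varepsilon_n^2)$, the statement reduces to proving $b+1/\tau_n^-=\mathcal{O}(\varepsilon_n^2)$. Since $b>0$ was chosen with $(0,b]\cap\Spec(S(\lambda_m))=\varnothing$ and $0$ is an isolated eigenvalue of finite multiplicity of $S(\lambda_m)$ (as shown above), $0$ is the largest eigenvalue of $S(\lambda_m)$ below $b$, whence $-1/b=\min\Spec\bigl((S(\lambda_m)-b)^{-1}\bigr)$; it is therefore equivalent to show $\tau_n^-=-1/b+\mathcal{O}(\varepsilon_n^2)$.

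The next step is to compare $\tau_n^-$ with the inverse residual $\tilde\tau_n^-=\min\bigl[\Spec\bigl((S(\lambda_m)-b)^{-1},(S(\lambda_m)-b)\cL_n\bigr)\bigr]$ built from the same test space $\cL_n$ but with the fixed operator $S(\lambda_m)$. Applying Lemma~\ref{daplcon} with $T=S(\lambda_m)$ and the isolated eigenvalue $\lambda=0$---the hypothesis $\dist_{S(\lambda_m)}[\basis,\cL_n]=\mathcal{O}(\varepsilon_n)$ being a restatement of $\delta_{S(\lambda_m)}(\cE,\cL_n)=\varepsilon_n$ since $\cE$ is finite dimensional---yields $b+1/\tilde\tau_n^-\le 0$ together with an enclosure of $0$ of width $\mathcal{O}(\varepsilon_n^2)$, so that $\tilde\tau_n^-=-1/b+\mathcal{O}(\varepsilon_n^2)$. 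To control $\tau_n^--\tilde\tau_n^-$ I would use the explicit form of the Schur complement: by \eqref{schuract} and the resolvent identity for $(D-\mu)^{-1}$,
\[
S(\mu_n^+)x-S(\lambda_m)x=-(\mu_n^+-\lambda_m)\bigl[x+B(D-\lambda_m)^{-1}(D-\mu_n^+)^{-1}B^*x\bigr],\qquad x\in\mathcal{D};
\]
since $B(D-\lambda_m)^{-1}(D-\mu)^{-1}B^*$ is closed on $\mathcal{D}$, the closed graph theorem makes it $S(\lambda_m)$-bounded, uniformly for $\mu$ near $\lambda_m$ because $S(\cdot)$ is a holomorphic family of type~(A) with fixed domain $\mathcal{D}$, and therefore
\[
\|(S(\mu_n^+)-S(\lambda_m))x\|\le C\,|\mu_n^+-\lambda_m|\,\|x\|_{S(\lambda_m)}=\mathcal{O}(\varepsilon_n^2)\,\|x\|_{S(\lambda_m)},\qquad x\in\mathcal{D}.
\]

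To conclude, one observes that $S(\lambda_m)-b$ is a bounded bijection from $(\mathcal{D},\|\cdot\|_{S(\lambda_m)})$ onto $\mathcal{H}$, so $\|(S(\lambda_m)-b)u\|$ is comparable to $\|u\|_{S(\lambda_m)}$. Inserting the last displayed estimate into the Rayleigh-type quotients whose infima over $\cL_n\setminus\{0\}$ define $\tau_n^-$ and $\tilde\tau_n^-$, and normalising $\|u\|_{S(\lambda_m)}=1$ so that the numerators stay bounded and the denominators stay bounded away from zero, one finds that the two quotients differ by $\mathcal{O}(\varepsilon_n^2)$ uniformly in $u$; taking infima gives $|\tau_n^--\tilde\tau_n^-|=\mathcal{O}(\varepsilon_n^2)$. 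As $\tau_n^-,\tilde\tau_n^-\to-1/b\neq0$, this gives $\tau_n^-=-1/b+\mathcal{O}(\varepsilon_n^2)$, hence $b+1/\tau_n^-=\mathcal{O}(\varepsilon_n^2)$, and combined with $\mu_n^-\le\lambda_m$ we get $\lambda_m-\mu_n^-=\mathcal{O}(\varepsilon_n^2)$, as required.

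The main obstacle is precisely this last comparison. It is not a direct application of Lemma~\ref{daplcon}, because the operator $S(\mu_n^+)$ whose enclosure is needed---and with it its kernel and its smallest non-positive eigenvalue---moves with $n$. The device that resolves this is to freeze the operator at $S(\lambda_m)$, apply Lemma~\ref{daplcon} there, and absorb the discrepancy $S(\mu_n^+)-S(\lambda_m)$, which is $\mathcal{O}(\varepsilon_n^2)$ in the $S(\lambda_m)$-graph norm exactly because $\mu_n^+-\lambda_m=\mathcal{O}(\varepsilon_n^2)$, into the Galerkin quotients. This is where the hypotheses that $S(\cdot)$ be of type~(A) with $\mu$-independent domain $\mathcal{D}$ and that $B(D-\lambda_m)^{-1}(D-\mu)^{-1}B^*$ be closed on $\mathcal{D}$ enter; for the plane slab configuration \eqref{slab} they hold automatically, as noted before the statement.
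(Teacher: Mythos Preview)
Your argument is correct and follows the same overall plan as the paper: establish the identity $S(\mu)-S(\lambda_m)=(\lambda_m-\mu)\bigl[I+B(D-\lambda_m)^{-1}(D-\mu)^{-1}B^*\bigr]$ on $\mathcal{D}$, use the type-(A) hypotheses to obtain a uniform $S(\lambda_m)$-relative bound for the bracketed operator, and combine this with the $\mathcal{O}(\varepsilon_n^2)$ upper-bound rate from Theorem~\ref{super}. The execution differs in the final transfer step. You freeze at $T=S(\lambda_m)$, invoke Lemma~\ref{daplcon} at the eigenvalue $0$ to obtain an auxiliary $\tilde\tau_n^-=-1/b+\mathcal{O}(\varepsilon_n^2)$, and then compare $\tau_n^-$ with $\tilde\tau_n^-$ by a direct, uniform perturbation of the Rayleigh quotients over the common test space $\cL_n$. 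The paper instead works with the moving space $\hat\cL_n=(S(\mu_n^+)-b)\cL_n$, verifies $(\hat\cL_n)\in\Lambda$ and $\delta(\cE,\hat\cL_n)=\mathcal{O}(\varepsilon_n)$, applies \eqref{galerkin2} to the fixed bounded operator $(S(\lambda_m)-b)^{-1}$ on $\hat\cL_n$, and then passes to $(S(\mu_n^+)-b)^{-1}$ via the resolvent-norm estimate \eqref{reso_Schur} of Lemma~\ref{lemres1}. Your route is a shade more elementary in that it replaces \eqref{reso_Schur} by a quotient comparison and avoids checking $(\hat\cL_n)\in\Lambda$; the paper's route is more modular, reusing the resolvent lemma it has already proved. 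Note, however, that both routes still rely on the first conclusion of Lemma~\ref{lemres1}, namely $(0,b]\cap\Spec(S(\mu_n^+))=\varnothing$ for large $n$, in order to make the construction \eqref{lower_seq} and the inequality $\mu_n^-\le\lambda_m$ legitimate.
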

\begin{proof}
Let $\mu\in(d,\lambda_{\mathrm{e}})$, $x\in\mathcal{D}$ and $\nu<\min\Spec(A)$. According to \eqref{schuract} we have
\begin{align*}
S(\mu)x - S(\lambda_m)x &= (A-\nu)(x - \overline{(A-\nu)^{-1}B}(D-\mu)^{-1}B^*x) + (\nu-\mu)x\\
&\quad - (A-\nu)(x - \overline{(A-\nu)^{-1}B}(D-\lambda_m)^{-1}B^*x) - (\nu-\lambda_m)x\\
&= (A-\nu)(\overline{(A-\nu)^{-1}B}[(D-\lambda_m)^{-1} - (D-\mu)^{-1}]B^*x)\\
&\quad + (\lambda_m-\mu)x\\
&= (\lambda_m-\mu)(A-\nu)(\overline{(A-\nu)^{-1}B}(D-\lambda_m)^{-1}(D-\mu)^{-1}B^*x)\\
&\quad + (\lambda_m-\mu)x\\
&= (\lambda_m-\mu)B(D-\lambda_m)^{-1}(D-\mu)^{-1}B^*x+ (\lambda_m-\mu)x.
\end{align*}
We consider the closed operator $B(D-\lambda_m)^{-1}(D-\mu)^{-1}B^*$. Since
\begin{align*}
B(D-\lambda_m)^{-1}(D-\mu)^{-1}B^*x = \frac{S(\mu)x - S(\lambda_m)x}{\lambda_m - \mu} - x\quad\textrm{for}\quad\mu\ne\lambda_m,
\end{align*}
and $S(\mu)$ is holomorphic of type (A), then
$\Vert B(D-\lambda_m)^{-1}(D-\mu)^{-1}Bx\Vert$ is uniformly bounded in a neighbourhood of $\mu$. 
Moreover, for any $y\in\Dom(B^*)$ the function
\begin{displaymath}
\langle B(D-\lambda_m)^{-1}(D-\mu)^{-1}B^*x,y\rangle = 
\langle (D-\mu)^{-1}B^*x,(D-\lambda_m)^{-1}B^*y\rangle
\end{displaymath}
is analytic for $\mu\in U$. It follows that 
$B(D-\lambda_m)^{-1}(D-\mu)^{-1}B^*$ is a holomorphic family of type (A). 

Let $J\subset(d,\lambda_{\mathrm{e}})$ be any compact interval containing a neighbourhood of $\lambda_m$. 
By virtue of \cite[Section VII.2.1]{katopert}, there always exists a constant $c_3>1$ such that
\begin{equation}\label{M}
\frac{\|x\| + \| B(D-\lambda_m)^{-1}(D-\mu)^{-1}B^*x\|}{\|x\| + \| B(D-\lambda_m)^{-2}B^*x\|} \le c_3\quad\textrm{for all}\quad \mu\in J.
\end{equation}
Since the operators $B(D-\lambda_m)^{-2}B^*$ and $S(\lambda_m)$ have the same domain $\mathcal{D}$, there exist constants $\tilde{\alpha},\tilde{\beta}\ge 0$ such that
\begin{align}
\| B(D-\lambda_m)^{-2}B^*x\|&\le \tilde{\alpha}\| S(\lambda_m)x\| +
\tilde{\beta}\|x\|\quad\textrm{for all}\quad x\in\mathcal{D}\label{consts}.
\end{align}
As $\mu_n^+-\lambda=\mathcal{O}(\varepsilon_n^2)$, for some $N\in\mathbb{N}$ large enough $\mu_n^+\in J$ whenever $n\ge N$. Combining \eqref{consts} with \eqref{M}, gives
\begin{align*}
\| B(D-\lambda_m)^{-1}(D-\mu_n^+)^{-1}B^*x\|&\le (c_3-1)\|x\| + c_3\| B(D-\lambda_m)^{-2}B^*x\|\\
&\le (c_3-1)\|x\| + \tilde{\alpha}c_3\|S(\lambda_m)x\| + \tilde{\beta}c_3\|x\|\\
&\le c_4(\| S(\lambda_m)x\| + \|x\|),
\end{align*}
where $c_4\ge 0$ is independent of $n\ge N$. Thus
\begin{equation}\label{mess1}
\| S(\mu_n^+)x - S(\lambda_m)x\| \le (\mu_n^+-\lambda_m)(c_4+1)(\| S(\lambda_m)x\|+\|x\|)\quad\textrm{for all}\quad n\ge N.
\end{equation}

Let $\hat{\cL}_n = (S(\mu_n^+)-b)\cL_n$. We show that
$(\hat{\cL}_n)\in\Lambda$. Let $v\in\mathcal{H}$. There exists $u\in\mathcal{D}$ such that
$(S(\lambda_m)-b)u = v$. Since $(\cL_n)\in\Lambda(S(\lambda_m))$ we have a sequence $u_n\in\cL_n$
satisfying $(S(\lambda_m)-b)u_n \to v$. As $b\not \in \Spec(S(\lambda_m))$, the 
sequences $\| u_n\|$ and $\| S(\lambda_m)u_n\|$ are uniformly bounded. Hence, it follows from \eqref{mess1} that 
$\|S(\mu_n^+)u_n-S(\lambda_m)u_n\|\to 0$. Since
\[
v - (S(\mu_n^+)-b)u_n +  (S(\mu_n^+)-S(\lambda_m))u_n = v - (S(\lambda_m)-b)u_n \to 0,
\]
clearly also $(S(\mu_n^+)-b)u_n\to v$. Thus $\hat{\cL}_n\in\Lambda$.

We now show that $\delta(\cE,\hat{\cL}_n)=\mathcal{O}(\varepsilon_n)$. Let $\phi_1,\dots,\phi_{k}$ be an orthonormal basis for $\cE$. There exist vectors $u_{n,j}\in\cL_n$, such that
$\|S(\lambda_m)(\phi_j - u_{n,j})\|\le \varepsilon_n$ and $\|\phi_j - u_{n,j}\|\le\varepsilon_n$ for each $1\le j\le k$. We set $\hat{u}_{n,j} = (S(\mu_n^+)-b)u_{n,j}\in\hat{\cL}_n$. Using \eqref{mess1} we have for any normalised $\phi\in\cE$
\begin{align*}
\Big\|\phi + \sum_{j=1}^k\frac{\langle\phi_j,\phi\rangle}{b}\hat{u}_{n,j}\Big\| &=
\Big\|\sum_{j=1}^k\langle\phi_j,\phi\rangle\phi_j +
\sum_{j=1}^k\frac{\langle\phi_j,\phi\rangle}{b} (S(\mu_n^+)-b)u_{n,j}\Big\|\\
&\le\Big\|\sum_{j=1}^k\langle\phi_j,\phi\rangle(\phi_j -u_{n,j})\Big\| + 
\Big\| \sum_{j=1}^k\frac{\langle\phi_j,\phi\rangle}{b}S(\mu_n^+)u_{n,j}\Big\|\\
&\le k\varepsilon_n + \frac{1}{b}\sum_{j=1}^k\left(\|S(\mu_n^+)u_{n,j} - S(\lambda_m)u_{n,j}\| + \|S(\lambda_m)u_{n,j}\|\right)\\
&\le 2k\varepsilon_n + \frac{1}{b}\sum_{j=1}^k (\mu_n^+-\lambda_m)(c_4+1)(\| S(\lambda_m)u_{n,j}\|+\|u_{n,j}\|)\\
&\le 2k\varepsilon_n + \frac{1}{b}\sum_{j=1}^k (\mu_n^+-\lambda_m)(c_4+1)(\varepsilon_n+\|u_{n,j}\|).
\end{align*}
Therefore $\delta(\cE,\hat{\cL}_n)=\mathcal{O}(\varepsilon_n)$.

We complete the proof of the theorem as follows. By applying \eqref{galerkin2} to the operator $T=(S(\lambda_m)-b)^{-1}$ and
eigenvalue $(-b)^{-1}=\min\{\Spec((S(\lambda_m) - b)^{-1})\}$, we obtain
\begin{equation}\label{gal1}
\min\{\Spec((S(\lambda_m)-b)^{-1},\hat{\cL}_n)\} + b^{-1} = \mathcal{O}(\varepsilon_n^2).
\end{equation}
Using Lemma~\ref{lemres1} and $0\le\mu_n^+-\lambda_m=\mathcal{O}(\varepsilon_n^2)$, we have 
\begin{equation}\label{gal2}
\|(S(\lambda_m)-b)^{-1} - (S(\mu_n^+)-b)^{-1}\|=\mathcal{O}(\varepsilon_n^2).
\end{equation}
From \eqref{gal1}, \eqref{gal2} and the Rayleigh-Ritz variational principle, it becomes clear that
\begin{equation}\label{gal3}
\tau_n^- = \min\{\Spec((S(\mu_n^+)-b)^{-1},\hat{\cL}_n)\}\quad\textrm{satisfies}\quad\tau_n^- + b^{-1} = 
\mathcal{O}(\varepsilon_n^2).
\end{equation}
Moreover, $b + 1/\tau_n^-$ is precisely the lower bound on the smallest in modulus non-positive eigenvalue 
of $S(\mu_n^+)$ which is obtained from the
Zimmermann-Mertins method. Then $\mu_n^- = \mu_n^+ + b+1/\tau_n^-\le\lambda_m$ follows from Lemma \ref{1}, and $\lambda_m -\mu_n^- = \mathcal{O}(\varepsilon_n^2)$ follows from \eqref{gal3}.
\end{proof}


\begin{figure}[tt]
\centerline{\includegraphics[height=8cm, angle=0]{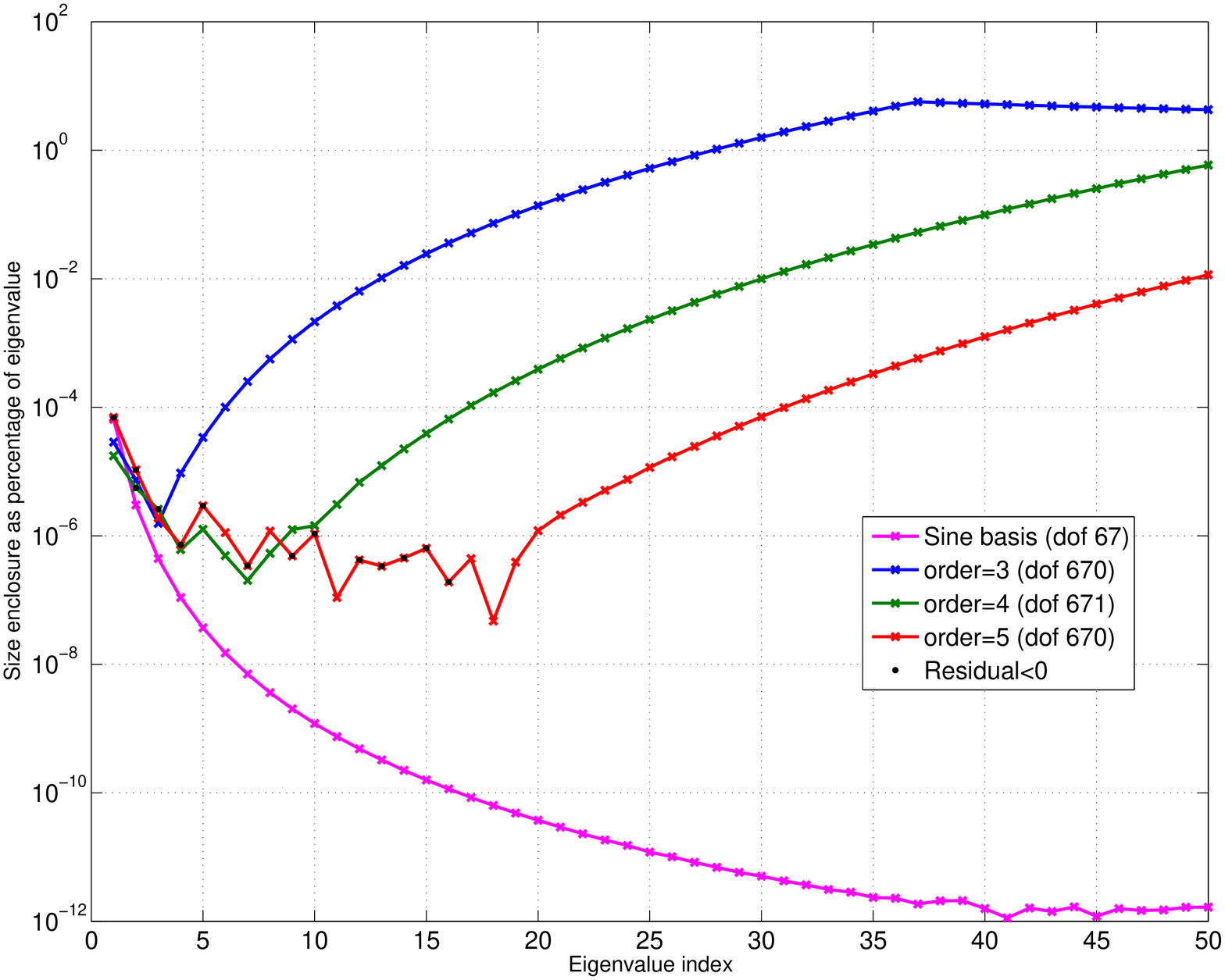}}
\caption{Log-log graph. Vertical axis:  $\frac{\mu_n^+-\mu_n^-}{\mu_n^-}\times 100$. Horizontal axis: eigenvalue index $m$. We depict the relative size of the enclosure in the calculation of the first 50 eigenvalues of $M_{\slb}$ of Example~\ref{non-constant_coeff}. The subspace $\cL_n$ is chosen to be:
$\cL_{67}^{\sin}$ and $\cL(h,1,r)$ for Hermite elements of order
$r=3$, 4 and 5 on an uniform mesh with $h$ chosen so the dimension of the spaces is
approximately $10\times 67$. We have chosen $\tau_{\mathrm{i}}=10^{-14}$, $\tau_{\mathrm{s}}=10^{-12}$, $\tau_{\mathrm{b}}=\mathcal{O}(10^{-5})$ in the case of the sine basis, and $\tau_{\mathrm{i}}=10^{-10}$, $\tau_{\mathrm{s}}=10^{-6}$ and $\tau_{\mathrm{b}}=\mathcal{O}(10^{-3})$ in the case of the finite element method.
 \label{fig2}}
\end{figure}

\begin{figure}[t]
\centerline{\includegraphics[height=8cm, angle=0]{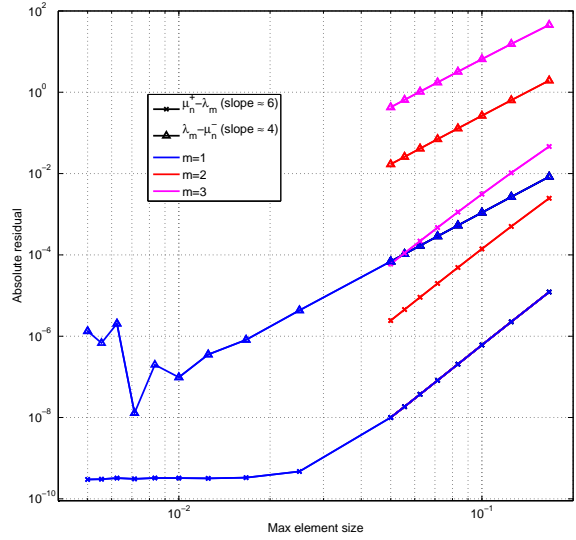}}
\caption{Log-log graph. Vertical axis: $|\lambda^{\exctt}_{1,2,3}-\mu^{\pm}_{n,h}|$. Horizontal axis: maximum element size $h$. The subspace $\cL_n$ is chosen to be:
$\cL(h,1,3)$ for decreasing values of $h$. For these calculations $\tau_{\mathrm{i}}=10^{-10}$, $\tau_{\mathrm{s}}=10^{-12}$, $\tau_{\mathrm{b}}=\mathcal{O}(10^{-3})$.\label{fig4}}
\end{figure}

\section{Numerical examples: plane slab configuration}    \label{numerical_slab}

An optimal strategy in terms of convergence for calculating enclosures for the configuration \eqref{slab} can be established from the approach in Section~\ref{supersection}. We now illustrate the practical applicability of this strategy by performing various numerical experiments on benchmark models. Our equilibrium quantities will be chosen from examples~\ref{constant_coeff} and \ref{non-constant_coeff}.

For a fixed $\mu\in(d,\infty)$, the eigenvalues of $S(\mu)$ are simple in both examples. The corresponding eigenvectors are in $C^{\infty}(0,1)$ and they satisfy Dirichlet boundary conditions at the endpoints of the interval. We have
\[
   \Spec(M_{\slb})\cap(d,\infty)=\{\lam_1<\lam_2<\ldots\}
\]
where each eigenvalue is simple and $\lam_j\to\infty$.
Below we distinguish the model used by denoting these eigenvalues by $\lam^{\exctt}_m$ and $\lam^{\exvar}_m$ respectively. For all $u\in\Dom(\frak{s})=H^1_0((0,1);\ud x)$,
\begin{equation} \label{Schur_comp_pd}
\begin{aligned}
\frak{s}(\mu)[u] & \ge \frak{a}[u] - \mu\| u\|^2 \\
& \ge \pi^2\| u\|^2 + \langle(7/4 - x)u,u\rangle-\mu\| u\|^2 \\
& \ge (\pi^2 -\mu)\| u\|^2.
\end{aligned}
\end{equation}
Thus $S(\mu)$ is positive definite for $\mu\in(d,\pi^2)$. Hence upper bounds $\mu_n^+$ for $\lambda_m$ can be found from Theorem \ref{super} with $\kappa = 0$.

In practice we find $\mu_n^+$ as follows.
For a fixed $\cL_n$ we compute a few eigenvalues of $S(\mu)$ for $\mu$ in an uniform partition with $p$ points of a suitable interval $(a,b)$ containing only $\lambda_m$. We then approximate $\mu_n^+$ via one iteration of Newton's method. Below, the integrations involved in the assembling
of the matrix problems are set to a tolerance of the order $\mathcal{O}(\tau_{\mathrm{i}})$,
the eigenvalue solver is set to a tolerance of order $\mathcal{O}(\tau_{\mathrm{s}})$
and $\tau_{\mathrm{b}}=(b-a)/p$. These are  different for the different experiments.
By virtue of \eqref{derivative}, the root finding step is accurate to $\mathcal{O}(\tau_{\mathrm{b}}^2)$.

To find complementary lower bounds from
Theorem \ref{enclose}, we require $b>0$ such that $(0,b]\cap\Spec(S(\mu_n^+))=\varnothing$
for all sufficiently large $n\in\mathbb{N}$. From  \eqref{Schur_comp_pd}
 it follows that $\frak{s}(\mu_n^+)[u] \ge \frak{a}[u] - \mu_n^+\| u\|^2$ for all
$u\in\Dom(\frak{s})$ (in both Example~\ref{constant_coeff} and Example~\ref{non-constant_coeff}). By the minmax principle, the $(m+1)$-th eigenvalue of $S(\mu_n^+)+\mu_n^+$ lies above the $(m+1)$-th eigenvalue of $A$. In fact
$\lambda_m<(m+1)^2\pi^2<\lambda_{m+1}$ and we may choose $b\in(0,(m+1)^2\pi^2 - \mu_n^+)$.
Integration and eigenvalue solver tolerances are set as for the upper bounds.

We consider two canonical basis to generate
$\cL_n\subset \mathcal{D}$, see \eqref{domain_schur}. A first natural choice is the sine basis,
\[
  \cL_n^{\sin}=\Span\{u_1,\dots,u_n\}\qquad
   \text{where} \qquad u_n = \sqrt{2}\sin(n\pi x).
\]
Standard arguments show that
$\cL_n\in\Lambda(S(\lambda_m))$ and
\[
\delta_{S(\lambda_m)}(\ker S(\lambda_m),\cL_n)=\mathcal{O}(n^{-r})
\]
where $r$ can be chosen arbitrarilly large. Applying Theorem \ref{super} and Theorem \ref{enclose} we obtain
\begin{displaymath}
\mu_n^+\searrow\lambda_m,\quad\mu_n^-\nearrow\lambda_m,\quad\textrm{and}\quad
\mu_n^+-\mu_n^-=\mathcal{O}(n^{-r}).
\end{displaymath}
This means that the enclosures should converge to zero super-polynomially fast
for the family of subspaces $\cL_n^{\sin}$. See Table~\ref{table1} and Figure~\ref{fig2}.
All calculations involving this basis were coded in Matlab.

\begin{table}[hhh]
\begin{tabular}{c|c|c|c|c|c}
n & $\lambda^{\exvar}_1$ & $\lambda^{\exvar}_2$ & $\lambda^{\exvar}_3$ & $\lambda^{\exvar}_4$ & $\lambda^{\exvar}_5$  \\
\hline
& & & & &\\
5 & $12.350^{47799}_{38099}$ & $41.9106^{5300}_{3750}$ & $91.2474^{7057}_{6613}$ &  $160.3305^{8480}_{7817}$ &  $ 249.155^{19069}_{07400}$\\
& & & & &\\
10 & $12.3504^{7592}_{2524}$ & $41.9106^{5224}_{4418}$ &  $91.2474^{7031}_{6778}$ &  $160.33058^{264}_{158}$ &  $249.155079^{76}_{13}$ \\
& & & & &\\
20 &  $12.3504^{7563}_{4946} $& $41.9106^{5214}_{4796}$ &  $91.2474^{7026}_{6895}$ &   $160.330582^{62}_{04}$ &  $249.155079^{73}_{43}$ \\
& & & & &\\
40 & $12.3504^{7559}_{6228}$  &  $41.91065^{213}_{001}$ & $91.2474^{7026}_{6958}$ & $160.330582^{61}_{31}$ &  $249.155079^{73}_{57}$
\end{tabular}
\vspace{5mm}
\caption{Approximation of the first five eigenvalues of $M_{\slb}$ for
Example~\ref{non-constant_coeff} and test spaces chosen as $\cL_n^{\mathrm{s}}$.
For these calculations $\tau_{\mathrm{i}}=10^{-14}$, $\tau_{\mathrm{s}}=10^{-12}$, $\tau_{\mathrm{b}}=\mathcal{O}(10^{-5})$.
\label{table1}}
\end{table}

\begin{figure}[t]
\centerline{\includegraphics[height=8cm, angle=0]{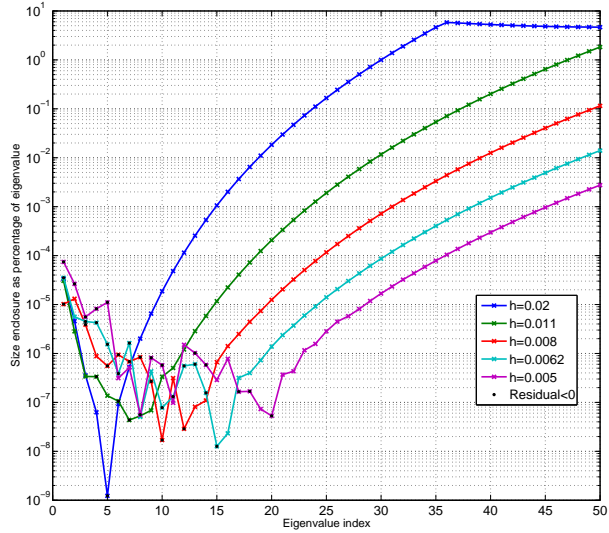}}
\caption{Log-log graph. Vertical axis:  $\frac{\mu_n^+-\mu_n^-}{\mu_n^-}\times 100$. Horizontal axis: eigenvalue index $m$. We depict the relative size of the enclosure in the calculation of the first 50 eigenvalues of $M_{\slb}$ of Example~\ref{non-constant_coeff}. The subspace $\cL_n$ is chosen to be:
$\cL(h,1,5)$ for decreasing values of $h$. For these calculations $\tau_{\mathrm{i}}=10^{-10}$, $\tau_{\mathrm{s}}=10^{-6}$, $\tau_{\mathrm{b}}=\mathcal{O}(10^{-3})$. \label{fig3}}
\end{figure}

Another natural basis is obtained by applying the finite element method.
Let  $\Xi$ be an equidistant partition of $[0,1]$ into $n$ sub-intervals $I_l=[x_{l-1},x_l]$ of length $h=1/n=x_l-x_{l-1}$. Consider the subspaces
\begin{equation}   \label{fe_subspaces}
    \cL(h,k,r)=V_h(k,r,\Xi)=\{v\in C^k(0,1): v\upharpoonright _{I_l}\in P_r(I_l),
    1\leq l\leq n, v(0)=0=v(1) \}.
\end{equation}
The $\cL(h,k,r)$ are the finite element spaces generated by $C^k$-conforming elements of  order $r$ subject to Dirichlet boundary conditions at $0$ and $1$.  Then
\[
    \|v-v_h\|_{W^{p,2}(0,1)}\leq c\|v\|_{H^{p+1}_\Xi(0,1)} h^{r+1-p}
\]
for $v_h\in \cL(h,k,r)$ the finite element interpolant of $v\in C^k\cap H^{r+1}_\Xi(0,1)$.
For fixed $k,r\ge 1$, let $\mu^+_{m,h}$ and $\mu^-_{m,h}$ be the upper and lower bounds for $\lambda_m$ given by Theorem \ref{super}  and Theorem \ref{enclose}, respectively. Then
\begin{equation}\label{order_upper_bound}
\mu^+_{m,h} -\lambda_m = \mathcal{O}(h^{2r}) \quad\textrm{and}\quad\mu^-_{m,h}-\lambda_m=\mathcal{O} (h^{2(r-1)}).
\end{equation}
All calculations involving this basis were coded in Comsol.

Figure~\ref{fig4} shows that the orders of convergence found in \eqref{order_upper_bound} are optimal in the case $r=3$ for Hermite elements and $m=1,2,3$. In order to compare the quality of the upper and lower bounds, we have chosen Example~\ref{constant_coeff}
and calculated the value of $\lambda^{\exctt}_m$ with the exact formula in machine precision. Observe that the upper bounds are all roughly 4 orders of magnitude more accurate than the lower bounds. This is certainly expected from the fact that
the calculation of the upper bound involves the solution of a second order
problem, whereas that of the lower bound involves the  eigenproblem \eqref{zimmer}
with $T=S(\mu^+_n)$ which is of fourth order. Here we have purposely chosen large values of $h$, so the calculation of the bounds for $\lambda_2$ and $\lambda_3$ is not particularly accurate.

The aim of the experiment performed in Figure~\ref{fig2} is to compare accuracies
in the computation of the bounds by picking $\cL_n$ of roughly the same dimension, but generated
by different bases. For this we have fixed $\cL_n$ of a given dimension and compute the size of the enclosure $(\mu^-_n,\mu^+_n)$ relative to the size of the lower bound $\mu^-_n$. We consider Example~\ref{non-constant_coeff}. We have chosen $\dim \cL_n=67$ for the sine basis and $\dim \cL_n\approx 670$ for the finite element bases (remember that the sine basis is exponentially accurate).

The accuracy deteriorates (even in relative terms) as the eigenvalue counting number $m$ increases. For the same dimension of $\cL_n$, accuracy increases as the order of the polynomial $r$ increases. In this figure, the enclosures found for $\lambda_m$ for $m<5$
($r=3$), $m<10$ ($r=4$) and $m<20$ ($r=5$) should not be trusted and it is just included for illustration purposes. This locking effect is consistent with the fact that the calculation of the enclosures can never be more accurate than a factor of
$\max\{\tau_{\mathrm{i}},\tau_{\mathrm{s}},\tau_{\mathrm{b}}^2\}$. 

We can examine this phenomenon in more detail from Figure~\ref{fig3} and the blue line in Figure~\ref{fig4}. As the dimension of the test subspace decreases, for each individual eigenvalue, the residual starts decreasing and eventually hits the accuracy threshold.
From Figure~\ref{fig4} it should be noted that the lower bound hits the threshold earlier than the upper bound, however this threshold for the lower bound is three to four orders of magnitude
larger that that of the lower bound.


\section{Numerical examples: cylindrical pinch configuration}

The approach considered in Section~\ref{supersection} cannot be implemented on the cylindrical pinch configuration for $m\not=0$ as the block operator matrix does not satisfy condition \ref{top_3}. We now report on a set of numerical experiments performed on the benchmark model in Example~\ref{example_cylinder}, by directly applying the method described in Section~\ref{zimmerman} to $T=M$.  

In this case we have chosen $\cL_n=\cL(h,1,r)\times \cL(h,1,r)$ where $\cL(h,1,r)$ is defined by \eqref{fe_subspaces} and is generated by Hermite elements. The Dirichlet boundary condition inposed at both ends of the interval $[0,1]$ ensures that $\cL_n \in \Dom(M)$.  In Table~\ref{tab1} we show computation of the first three eigenvalues 
above $\Spec_\ess(M)$. Similar calculations can be found in \cite[Table~1]{1991Kakoetal}. Note that in the latter, for $N=32$ the approximated eigenvalue appears to be below $\lambda_1$ whereas for $N=64$ it appears to be above $\lambda_1$. This phenomenon is not present in the method described in Section~\ref{zimmerman} as it always provide a certified enclosure for the eigenvalue. 

\begin{table}[hhh]
\begin{tabular}{c|c|cc|c}
$j$ & exact $\lambda_m$ & $(a,b)$ & enclosure & d.o.f. \\ \hline
$1$ & $4.38995771667$ & $(3,20)$ &  $4.3_{895445}^{903962}$  & $5004$    \\
$2$ & $29.4242820473$ &  $(20,60)$  & $29.42_{3873}^{4656}$ & $5720$   \\
$3$ & $73.8686971063$ &  $(60,100)$  & $73.86_{8030}^{9378}$ & $8004$
\end{tabular}
\caption{Enclosures for the first three eigenvalues in  $\Spec_\dsc (M)$ above the essential spectrum
for Example~\eqref{example_cylinder} by direct application of the method of Section~\ref{zimmerman}. For these calculations we have chosen $r=3$ and $\tau_{\mathrm{i}}=\tau_{\mathrm{s}}=10^{-6}$. \label{tab1} }
\end{table}

The eigenfunctions of $M$ associated to $\lambda_m$ possess a singularity at the origin, so neither the upper nor the lower bounds obey an estimate analogous to that of \eqref{order_upper_bound}. On the left of Figure~\ref{fig_cyl_1} we show a log-log plot of the size of the enclosure against maximum element size for $r=3$ and $r=5$. The graph clearly indicates that the order of decrease of the enclosure does not seem to decrease with the order of the polynomial.  On the right of Figure~\ref{fig_cyl_1} we show the absolute residuals for lower and upper bounds separately. Both graphs indicate that
\[
      |\lambda_m-\mu_{m,h}^\pm|=O(h^{1}) \qquad \text{as }h\to 0
\]
equally for $r=3$ and $r=5$.

\begin{figure}[t]
\centerline{\includegraphics[height=6cm, angle=0]{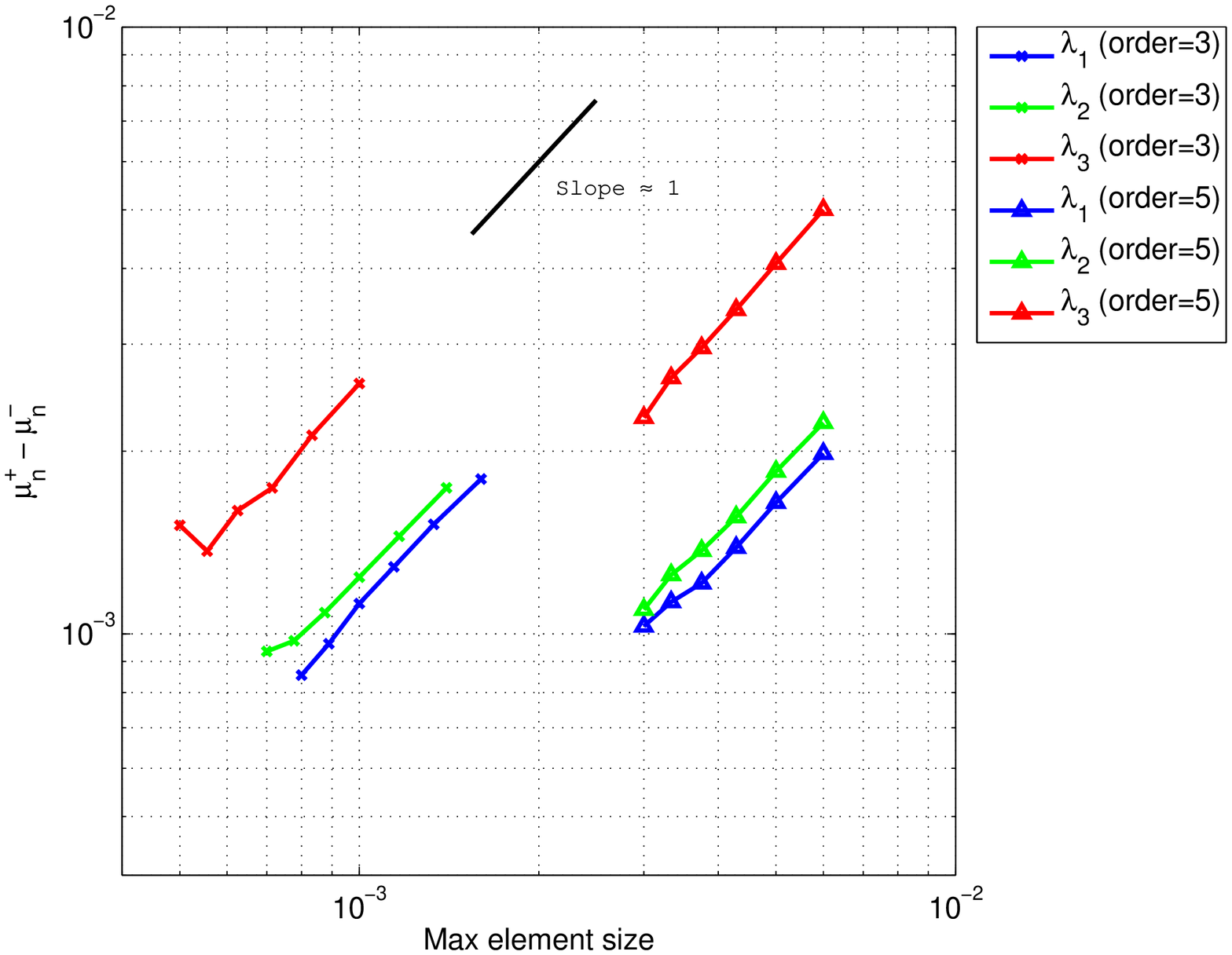} \hspace{-1cm}
\includegraphics[height=6cm, angle=0]{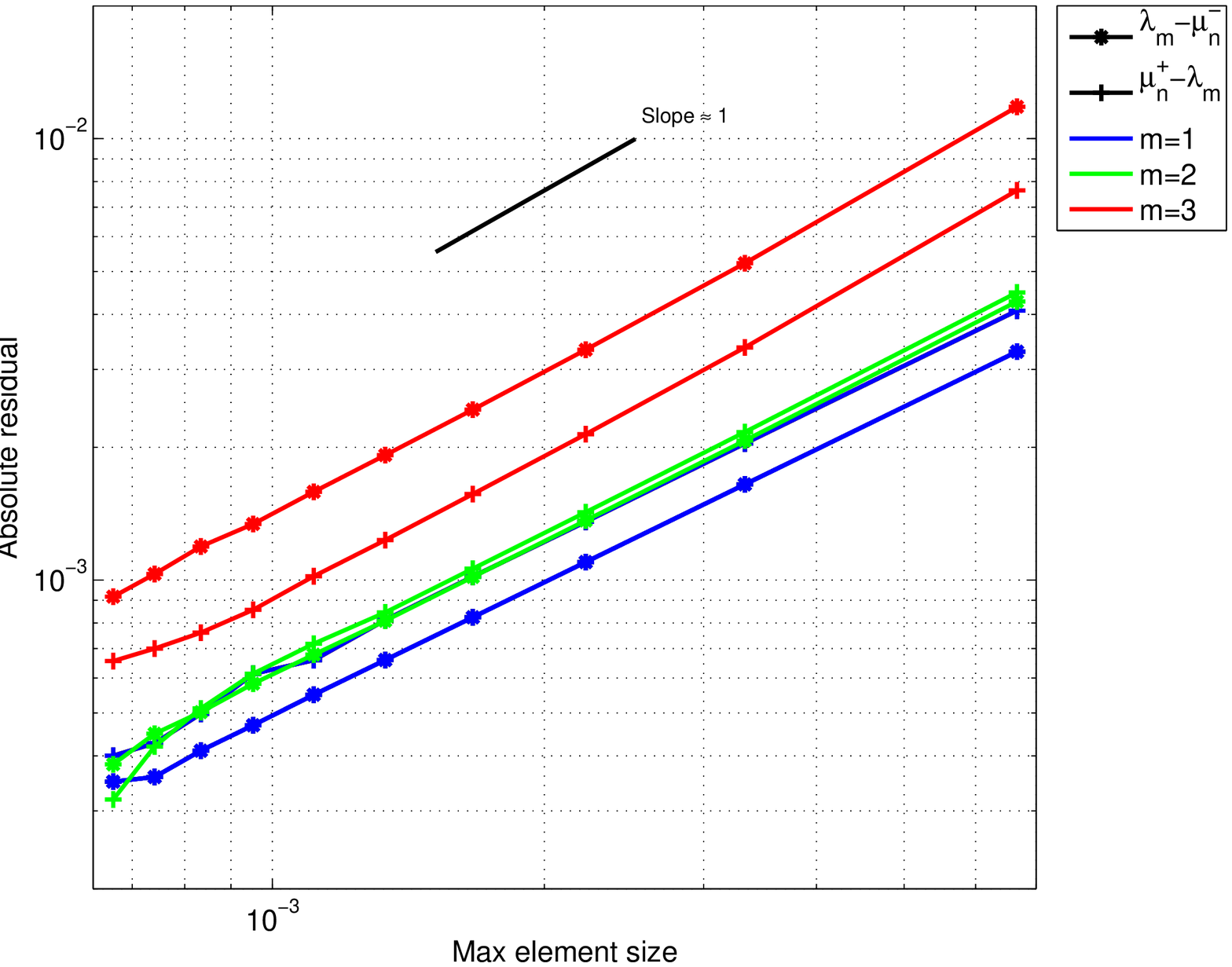}}
\caption{Log-log graphs. Vertical axis:  $\mu_{m,h}^+-\mu_{m,h}^-$ (left) and $|\lambda_m-\mu^\pm_{m,h}|$ (right). Horizontal axis: Maximum element size $h$.  For the subspaces 
$\cL_n$ we choose $r=3,5$ (left) and $r=3$ (right). Note that the order of decrease of all the residuals is roughly $O(h^1)$ for both polynomial orders (left) and both bounds (right). 
For these calculations $\tau_{\mathrm{i}}=\tau_{\mathrm{s}}=10^{-10}$. \label{fig_cyl_1}}
\end{figure}

\section*{Acknowledgements}
This research was funded by EPSRC grant number 113242.


\begin{thebibliography}{99}
\bibitem{atla}{\scshape F. Atkinson, H. Langer, R. Mennicken, A. Shkalikov},
The essential spectrum of some matrix operators. Math. Nachr. 167 (1994) 5--20.
\bibitem{2009Behnke}{\scshape H. Behnke}, Lower and Upper Bounds for Sloshing Frequencies.
International Series of Numerical Mathematics, Vol. 157 (2008) 13--22.
\bibitem{2010bs}{\scshape L. Boulton, M. Strauss}, On the convergence of second-order spectra and multiplicity. Proc. R. Soc. A 467 (2011) 264--275.
\bibitem{1998Davies}{\scshape E. B. Davies},  Spectral enclosures and complex resonances for general self-adjoint operators. LMS J. Comput. Math. 1 (1998) 42--74.
\bibitem{chat}{\scshape F. Chatelin}, Spectral Approximation of Linear Operators. Academic Press (1983).
\bibitem{dapl}{\scshape E. B. Davies, M. Plum},
Spectral pollution. IMA J. Numer. Anal. 24 (2004) 417--438.
\bibitem{math0}{\scshape D. Eschwe, M. Langer},
Variational principles for eigenvalues of self-adjoint operator functions,
Inter. Equ. Oper. Theory 49 (2004) 287--321.
\bibitem{1985Goerisch}{\scshape F. Goerisch, H. Haunhorst},
Eigenwertschranken fur Eigenwertaufgaben mit partiellen Differentialgleinschungen. Z. Angew. Math. Mech. 65 (1985) 129--135.
\bibitem{1987Kako}{\scshape T. Kako}, Essential spectrum of linearized MHD operator in cylindrical region. J. Appl. Maths. Phys. ZAMP. 38 (1987) 433--449.
\bibitem{1991Kakoetal}{\scshape T. Kako, J. Descloux}, Spectral approximation for the linearized MHD operator in cylindrical region. Japan J. Indust. Appl. Math. 8 (1991) 221--244.
\bibitem{1949Kato}{\scshape T. Kato}, On the upper and lower bounds of eigenvalues. J. Phys. Soc. Japan
4 (1949) 334--339.
\bibitem{katopert}{\scshape T. Kato},
Perturbation theory for linear operators, Springer-Verlag (1966).
\bibitem{LS2004}{\scshape M. Levitin, E. Shargorodsky}, Spectral pollution and second order relative spectra for self-adjoint operators, IMA J. Numer. Anal. 24 (2004) 393--416.
\bibitem{math}{\scshape M. Kraus, M. Langer, C. Tretter},
Variational principles and eigenvalue estimates for unbounded block operator matrices and applications,
J. Comp. and App. Math. 171 (2004) 311--334.
\bibitem{last}{\scshape M. Langer, M. Strauss},
Variational principles for unbounded operator functions and applications, preprint (2011).
\bibitem{1963Lehmann}{\scshape N. J. Lehmann},
Optimale Eigenwerteinschliessungen. Numer. Math. 5 (1963) 246--272.
\bibitem{1989Lifschitz}{\scshape A. Lifschitz}, \emph{Magnetohydrodynamics and Spectral Theory}.
Kluwer Academic Publisher (1989).
\bibitem{2001Mennickenetal}{\scshape R. Mennicken, S. Naboko, C. Tretter},
Essential spectrum of a system of singular differential operators and the asymptotic Hain-Lust operator. Proc. AMS. 130 (2001) 1699-1710.
\bibitem{zim}{\scshape U. Mertins, S. Zimmermann}, Variational bounds to eigenvalues of self-adjoint eigenvalue problems with arbitrary spectrum. Z. Anal. Anwendungen 14 (1995) 327--345.
\bibitem{1986Raikov}{\scshape G. Raikov}, The spectrum of a linear magnetohydrodynamic model with cylindrical
symmetry. C.R. Acad. Bulg. Sci. 39 (1986) 17--20.
\bibitem{1991Raikov}{\scshape G. Raikov}, The spectrum of a linear magnetohydrodynamic model with cylindrical symmetry. Arch. Rational Mech. Anal. 116 (1991) 161--198.
\bibitem{1977Rappaz}{\scshape J. Rappaz}, Approximation of the spectrum of a non-compact operator given by the magnetohydrodynamic stability of a plasma.
Numer. Math. 28 (1977), 15--24.
\bibitem{shar}{\scshape E. Shargorodsky},
Geometry of higher order relative spectra and projection methods. J. Operator Theory 44 (2000) 43--62.
\bibitem{2011Strauss}{\scshape M. Strauss}, \emph{Quadratic Projection Methods for Approximating the Spectrum of Self-Adjoint Operators} IMA J. Numer. Anal. 31 (2011) 40--60.
\bibitem{2011bStrauss}{\scshape M. Strauss}, \emph{The Second Order Spectrum and Optimal Convergence} preprint.
\bibitem{2007Tretter} {\scshape C. Tretter}, \emph{Spectral Theory Of Block Operator Matrices And Applications}. Imperial College Press (2007).
\end{thebibliography}
\end{document}